\newtheorem{thrm}{Theorem}[section]
\newtheorem{lem}[thrm]{Lemma}
\newtheorem{cor}[thrm]{Corollary}
\theoremstyle{definition}
\newtheorem{remark}[thrm]{Remark}
\numberwithin{equation}{section}
\newtheorem{atheorem}{Theorem}
\newtheorem{alemma}{Lemma}[atheorem]
\title{Estimates of conjugate harmonic functions with given set of singularities with application}
\author{I. Chyzhykov, Yu. Kosanyak}
\newcommand{\Bl}{\Bigl(}
\newcommand{\Br}{\Bigr)}
\def\arg{\mathop{\mbox{\rm arg}}}
\def\vfi{\varphi}
\def\ve{\varepsilon}
\def\al{\alpha}
\def\R{\mathbb R}
\def\C{\mathbb C}
\def\D{\mathbb D}
\def\Ga{\Gamma}
\def\ga{\gamma}
\def\ve{\varepsilon}
\def\tg{\mathop{\mbox{tg}}}
\renewcommand{\Re}{\mathop{\rm Re}}
\newcommand{\dist}{\mathop{\rm  dist}}
\renewcommand{\Im}{\mathop{\rm Im}}
\def\supp{\mathop{\mbox{supp}}}
\def\noi{\noindent}
\begin{document}

\maketitle

\begin{abstract} \normalsize

Let $E$ be an arbitrary  closed set on the unit circle $\partial \D$, u be a harmonic function on the unit disk $\D$ satisfying $|u(z)|\lesssim (1-|z|)^\gamma \rho^{-q}(z)$ where $\rho(z)= \mathop{\rm dist}(z, E)$, $\gamma$, $q$ are some real constants, $\gamma\le q$. We establish an estimate of the conjugate $\tilde u$ of the same type which is sharp in some sense and in the case $E=\partial D$ coincides with known estimates.  As an application we describe  growth classes  defined by the non-radial condition $|u(z)|\lesssim \rho^{-q}(z)$ in terms of smoothness of the Stieltjes measure associated to the harmonic function $u$.
%Answering a question of A.Zygmund,  G.MacLane and L.Rubel described conditions of boundedness of the $L_2$-norm w.r.t. the argument of $\log |B|$, where $B$ is a Blaschke product. We generalize their results in several directions. In particular, we describe growth and decrease  of $p$th logarithmic means, $p\in(1, \infty)$,  of bounded analytic functions  in the unit disc in terms of its zeros and the boundary measure. We also prove sharp upper estimates  of $p$th means of analytic and subharmonic functions of finite order in the unit disc.

keywords: {conjugate harmonic function,  analytic function, unit disk, modulus of continuity}

Subjclass: {Primary 31A05, Secondary 30J99, 30E20, 31A10, 31A20}

\end{abstract}

%\maketitle

%$E=\{ e^{i\theta} : \theta \in E^*\}$, $

\section{Introduction and main results}

\subsection{Classes of analytic functions defined by a non-radial growth condition}

Let $\D=\{z\in \mathbb{C}:|z|<1\} $ denote the unit disk, $\partial \D$ be its boundary.  In the classical theory of analytic and harmonic functions in the unit disc \cite{Tsuji}, \cite{Dur} dominates the approach where the unit circle is considered as `one collective singularity', and main characteristics of a function   such as the maximum modulus $M(r,f)$, integral  means $M_p(r,f)=\Bl \frac 1{2\pi} \int_0^{2\pi} |f(re^{i\theta})|^p\, d\theta \Br^\frac 1p $, $0<p<\infty$, convergence exponent $\inf \{\mu> 0: \sum_n (1-|a_n|)^{\mu+1} <\infty  \} $ of a sequence $(a_n)$ in $\D$,   etc. depend on $r$ or $|a_n|$, or in other words, on the distance $1-r$  to the unit circle only.
This approach unable to  use the structure of the set $E\subset \partial \D$ of singular points  of an analytic function $f$. It is worth to note that many years ago before Tsuji, V. Golubev proposed another approach, published in his work \cite{Gol61}\footnote{The work was originally published by parts in 1924--27 in Uchenyje zapiski Gosudarstvennogo Saratovskogo universiteta.}.

Given an arbitrary compact set $E\subset \mathbb{C}$ of singularities of an analytic function $f$, Golubev studied growth, decrease, zero distribution of $f$ using characteristics  in terms of the quantity  $\rho(z)=\rho_E(z)=\dist (z, E)$. In particular, he constructed a theory of canonical products, which are now called Golubev products.
As an example we cite one his result.

\begin{atheorem} [{\cite[\S 3]{Gol61}}]
If $(a_n)$ is a sequence in $\C$ such that $p=\inf \{ \alpha>0: \sum_{n} \rho^\alpha (a_n) <\infty\}$ is finite, then there exists an analytic   function $\varphi(z)$ vanishing exactly on $(a_n)$ such that for an arbitrary $\ve>0$
$$ |\varphi(z)|< e^{\rho^{-p-\varepsilon}(z)}$$ in some neighborhood of $E$.
\end{atheorem}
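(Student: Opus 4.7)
The plan is to construct $\varphi$ as a Golubev-type canonical product, modeled on the Weierstrass construction but adapted to the compact singular set $E$ instead of infinity. For each $n$ I would pick a projection $b_n\in E$ realizing $|a_n-b_n|=\rho(a_n)$, and using the classical Weierstrass elementary factor $E_k(w)=(1-w)\exp\!\Bl w+\frac{w^2}{2}+\dots+\frac{w^k}{k}\Br$, define
$$ G_n(z)=E_{p'}\!\Bl\frac{a_n-b_n}{z-b_n}\Br,\qquad \varphi(z)=\prod_n G_n(z), $$
where the integer $p'\ge p$ is fixed so that $p'+1>p+\ve$ for a small $\ve>0$. Each $G_n$ vanishes only at $a_n$ (where the argument equals $1$) and is holomorphic on $\C\setminus\{b_n\}\supset\C\setminus E$. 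Intuitively, the Möbius map $z\mapsto (a_n-b_n)/(z-b_n)$ sends $b_n\in E$ to $\infty$, so the Weierstrass factor designed for control at $\infty$ becomes a factor controlled near $E$.

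Convergence on compact subsets of $\C\setminus E$ would follow from the classical inequality $|\log E_{p'}(w)|\le C|w|^{p'+1}$ for $|w|\le 1/2$, the uniform bound $|(a_n-b_n)/(z-b_n)|\le \rho(a_n)/\dist(K,E)$ on a compact $K$, and the hypothesis $\sum_n\rho(a_n)^{p+\ve}<\infty$; thus $\varphi$ is analytic on $\C\setminus E$ and vanishes exactly on $(a_n)$. For the main growth estimate I would fix $z$ with $r=\rho(z)$ small; since $b_n\in E$ gives $|z-b_n|\ge r$, one has $|w_n(z)|:=|(a_n-b_n)/(z-b_n)|\le\rho(a_n)/r$. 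Split the indices into $I_1=\{n:\rho(a_n)\le r/2\}$ and $I_2=\{n:\rho(a_n)>r/2\}$. On $I_1$ one has $|w_n|\le 1/2$, and the small-argument bound contributes at most $Cr^{-(p'+1)}\sum_{I_1}\rho(a_n)^{p'+1}$, which is $O(r^{-p-\ve})$ after a dyadic rearrangement together with the Chebyshev counting bound $N(t):=\#\{n:\rho(a_n)>t\}=O(t^{-p-\ve})$. On $I_2$ the cardinality is at most $N(r/2)=O(r^{-p-\ve})$, and each individual factor can be bounded by a cruder estimate of the form $\log|E_{p'}(w)|\le C\bigl(|w|^{p'}+\log\frac{1}{|1-w|}\bigr)$, yielding a total contribution of the same order.

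The main obstacle is the $I_2$ regime: there the Golubev factors are not small, so one has to count them precisely via $N(t)$ and bound each one uniformly in $z$. This is also why the conclusion is formulated in a neighborhood of $E$ rather than on all of $\C\setminus(a_n)$: around each $a_n$ one must excise a tiny disc where $|1-w_n|$ is negligible and $\log\frac{1}{|1-w_n|}$ becomes unbounded. Once the two pieces are summed and a slightly larger $\ve'>\ve$ is chosen to absorb all implicit constants, the claim $|\varphi(z)|<\exp\!\bigl(\rho(z)^{-p-\ve'}\bigr)$ in a neighborhood of $E$ follows.
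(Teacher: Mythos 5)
The paper itself offers no proof of this statement: it is Theorem A, quoted from Golubev's work, so your argument can only be measured against the classical construction it cites. Your overall design --- a Golubev product of transplanted Weierstrass factors $E_{p'}\bigl((a_n-b_n)/(z-b_n)\bigr)$ with $b_n$ a nearest point of $E$ to $a_n$ --- is exactly the right (and the historical) idea, the convergence argument is fine, and your treatment of the index set $I_1$ is correct.

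The gap is the choice of the genus. You fix an integer $p'\ge p$ with $p'+1>p+\varepsilon$, treating the latter as the binding constraint; but that condition only controls the small factors in $I_1$. For $n\in I_2$ the polynomial part of $\log|E_{p'}(w_n)|$ is of size $|w_n|^{p'}$, and $|w_n|=\rho(a_n)/|z-b_n|$ can be as large as $\rho(a_n)/\rho(z)$: a single zero with $\rho(a_n)\asymp 1$ already forces $\log|\varphi(z)|\gtrsim \rho(z)^{-p'}$ as $z$ approaches $b_n$, and the whole sum over $I_2$ is only $O\bigl(\rho(z)^{-p'}\sum_{I_2}\rho(a_n)^{p'}\bigr)=O(\rho(z)^{-p'})$, which exceeds the target $\rho(z)^{-p-\varepsilon}$ whenever $p'>p+\varepsilon$ (e.g. $p=2.5$ forces $p'=3$ under your prescription). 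The $I_2$ estimate in fact requires the opposite inequality $p'\le p+\varepsilon$, so one must take $p'=\lfloor p+\varepsilon\rfloor$ (essentially $\lfloor p\rfloor$), exactly as in Hadamard's theory, where a genus larger than the order inflates the growth of the product; with that choice both halves close, via $\rho(a_n)^{p'+1}\le\rho(a_n)^{p+\varepsilon}(r/2)^{p'+1-p-\varepsilon}$ on $I_1$ and $\rho(a_n)^{p'}\le\rho(a_n)^{p+\varepsilon}(r/2)^{p'-p-\varepsilon}$ on $I_2$. Two smaller points: the correct upper bound for $\log|E_{p'}(w)|$ contains $+\log|1-w|\le\log(1+|w|)$, not $\log\frac{1}{|1-w|}$ --- near the zeros $|\varphi|$ becomes small, so no discs need to be excised for an upper estimate --- and the restriction to a neighborhood of $E$ is needed simply because $e^{\rho^{-p-\varepsilon}(z)}$ tends to $1$ away from $E$ while $\varphi$ need not be bounded by $1$ there, not because of any singular behaviour at the $a_n$.
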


Golubev's results remained unknown for a long time, but his approach has recently obtained new breath  in a series of works by A. Borichev, L. Golinski, S. Kupin \cite{BGK09}--\cite{BorGol12}, S. Favorov and L. Golinski \cite{FG09}--\cite{FG19}. In particular, in \cite{FG09} the authors have found optimal Blaschke-type condition for zero set for the class of analytic functions in the unit disc satisfying
\begin{equation}\label{e:anal_growth_con}
  |f(z)|\le e^{D\rho_E^{-q}(z)}
\end{equation}
where $E=\overline{E}\subset \partial \D$, $D$, $q$ are nonnegative constants. It is appeared that the notion of a type of a set $E$ \cite{AK} characterizing `sparseness' plays an important role. In \cite{BGK09} the authors consider classes of analytic functions in $\D$ defined by the distances to two disjoint finite sets on the unit circle. Results of such type effectively apply to the study of complex perturbation of certain self-adjoint operators.

\subsection{Conjugate harmonic functions}
Let $u$ be a harmonic function in  $\D$. We denote  $M_\infty(r,u)=\sup \{ |u(z)|: |z|=r\}$, $0\le r<1$. Estimates  of the growth of the harmonic conjugate $\tilde u(z)$ have many applications in the function theory (see \cite{Dur}, \cite{HL}, \cite{Z}). In particular, for $1<p<\infty$, by M. Riesz theorem (\cite{Z}) $M_p(r,u)\le  \psi\Bl \frac 1{1-r}\Br$  implies $M_p(r,\tilde u)=O\Bl \psi\Bl \frac 1{1-r}\Br \Br$, where $\psi$ is a positive increasing function on $[1, \infty)$, $r\in [0,1)$.
The cases $p=1$ and $p=\infty$ are more delicate.

Let  $\varphi\colon \R_+ \to\R_+$  be a nondecreasing function.
Suppose that $\varphi(x)$ is almost increasing for $x\ge 1/2$, i.e.
there exist positive constants  $a$ and $c$ such that
\begin{equation}\label{e:u_con_shields}
    \frac{\varphi(x_2)}{x_2^a}\le c     \frac{\varphi(x_1)}{x_1^a}, \quad x_2>x_2\ge \frac 12.
\end{equation}
Let \begin{equation}\label{e:psi_conjugate}
    \tilde{\varphi}(x)=\int_{\frac 12}^x \frac {\varphi(t)}t dt, \quad x\ge \frac 12.
\end{equation}
\begin{atheorem}\label{t:shields_will} \sl
Suppose that $u$ is harmonic in $\D$, $\varphi$ is an increasing function on $\R_+$, and  (\ref{e:u_con_shields}) holds.
Then  \begin{itemize}
       \item [1)]  $M_\infty(r,u)=O\Bl \varphi \Bl \frac 1{1-r}  \Br \Br$ implies  $M_\infty(r,\tilde u)=O\Bl \tilde \varphi \Bl \frac 1{1-r}  \Br \Br$, $r\to 1-$;
       \item [2)]  $M_1(r,u)=O\Bl \varphi \Bl \frac 1{1-r}  \Br \Br$ implies $M_1(r,\tilde u)=O\Bl \tilde \varphi \Bl \frac 1{1-r}  \Br \Br$, $r\to1-$.
     \end{itemize}
\end{atheorem}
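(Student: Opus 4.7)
\smallskip
\noindent\textbf{Proof plan for Theorem~\ref{t:shields_will}.}

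The plan is to reduce both assertions to a single pointwise/integrated estimate for the derivative of the analytic completion $f=u+i\tilde u$ (normalized so that $\tilde u(0)=0$), and then to integrate this estimate along a radius. Since for $z=se^{i\theta}$ one has
$$\frac{\partial \tilde u}{\partial s}(se^{i\theta})=\Im\bigl(e^{i\theta}f'(se^{i\theta})\bigr),$$
it follows that $\bigl|\partial_s \tilde u(se^{i\theta})\bigr|\le|f'(se^{i\theta})|$, and consequently
$$|\tilde u(re^{i\theta})|\le\int_0^r |f'(se^{i\theta})|\,ds,\qquad M_1(r,\tilde u)\le\int_0^r M_1(s,f')\,ds.$$
Thus, everything boils down to controlling $M_\infty(s,f')$ and $M_1(s,f')$ in terms of the corresponding means of $u$ on a slightly larger circle.

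The second step uses the Schwarz representation formula: for $|z|<\sigma<1$,
$$f(z)=\frac{1}{2\pi}\int_0^{2\pi}\frac{\sigma e^{i\phi}+z}{\sigma e^{i\phi}-z}\,u(\sigma e^{i\phi})\,d\phi,$$
so differentiation yields
$$f'(z)=\frac{1}{\pi}\int_0^{2\pi}\frac{\sigma e^{i\phi}}{(\sigma e^{i\phi}-z)^2}\,u(\sigma e^{i\phi})\,d\phi.$$
Taking suprema and using $|\sigma e^{i\phi}-z|\ge \sigma-|z|$ produces the pointwise bound $M_\infty(s,f')\le C\,M_\infty(\sigma,u)/(\sigma-s)$. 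For the $L^1$ bound, one first takes moduli under the integral, then integrates in $\theta$ over $|z|=s$, applies Fubini, and uses the classical identity $\int_0^{2\pi}d\theta/|\sigma e^{i\phi}-se^{i\theta}|^2=2\pi/(\sigma^2-s^2)$ to obtain
$$M_1(s,f')\le\frac{C\,M_1(\sigma,u)}{\sigma-s},\qquad 0\le s<\sigma<1.$$

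In the third step, given the growth hypothesis $M_p(\sigma,u)\le C\varphi(1/(1-\sigma))$ (with $p\in\{1,\infty\}$), I choose $\sigma=(1+s)/2$ so that $\sigma-s=(1-s)/2$. Condition (\ref{e:u_con_shields}) gives the doubling $\varphi(2/(1-s))\le c\cdot 2^a\,\varphi(1/(1-s))$, and thus
$$M_p(s,f')\le\frac{C\,\varphi(1/(1-s))}{1-s}.$$
Integrating along the radius and performing the change of variable $t=1/(1-s)$, which yields $ds/(1-s)=dt/t$, converts the integral on the right into $\int_1^{1/(1-r)}\varphi(t)/t\,dt\le\tilde\varphi(1/(1-r))+O(1)$ by (\ref{e:psi_conjugate}). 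This gives the desired conclusions in both cases.

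The main technical point I expect to be delicate is the passage from a radial integral of $|f'|$ to a global bound for $M_1(r,\tilde u)$: here one must justify interchanging the radial integration with the integration in $\theta$ (via Fubini), and then carry out the Poisson-kernel computation that reduces $1/(\sigma^2-s^2)$ to $1/(\sigma-s)$ with a harmless constant. Everything else is routine: the Schwarz formula, the doubling consequence of (\ref{e:u_con_shields}), and the change of variable producing $\tilde\varphi$.
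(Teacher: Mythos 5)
The paper states Theorem~\ref{t:shields_will} as a known background result (it is the Shields--Williams theorem, cf.\ \cite{ShiWil}) and gives no proof of it, so there is no internal argument to compare yours against; I can only assess the proposal on its own terms. Your route is the standard one: differentiate the Schwarz representation of $f=u+i\tilde u$ over a circle of radius $\sigma=(1+s)/2$, reduce both cases to the single estimate $M_p(s,f')\lesssim M_p(\sigma,u)/(\sigma-s)$ for $p\in\{1,\infty\}$, integrate $|f'|$ radially, use the doubling consequence of \eqref{e:u_con_shields}, and convert $\int_0^r \varphi\bigl(\frac1{1-s}\bigr)\,\frac{ds}{1-s}$ into $\tilde\varphi\bigl(\frac1{1-r}\bigr)$ by the substitution $t=1/(1-s)$. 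The architecture is sound, and the $M_1$ case (Fubini plus the Poisson-kernel identity) is handled correctly.

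There is one step whose justification, as written, does not deliver what you claim, and the discrepancy is not cosmetic. In the $M_\infty$ case you bound the kernel $|\sigma e^{i\phi}-z|^{-2}$ pointwise by $(\sigma-|z|)^{-2}$; that yields $M_\infty(s,f')\lesssim M_\infty(\sigma,u)/(\sigma-s)^{2}$, not $/(\sigma-s)$. Fed into the radial integration, the extra power turns the right-hand side into $\int_1^{1/(1-r)}\varphi(t)\,dt$ rather than $\int_1^{1/(1-r)}\varphi(t)\,t^{-1}dt$, i.e.\ a bound weaker than $\tilde\varphi\bigl(\frac1{1-r}\bigr)$ by a full factor of $\frac1{1-r}$, so the proof would not close. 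The intermediate claim itself is true: pull $M_\infty(\sigma,u)$ out of the integral and apply the same identity $\int_0^{2\pi}|\sigma e^{i\phi}-z|^{-2}\,d\phi=2\pi/(\sigma^2-|z|^2)$ that you already invoke for the $M_1$ case (equivalently, quote the standard interior gradient estimate for bounded harmonic functions). With that single repair, together with the routine $O(1)$ bookkeeping near $r=0$ and the normalization $\tilde u(0)=0$, the argument is complete.
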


Theorem \ref{t:shields_will} does not give us particular information  on local growth of $u$ and $\tilde u$ in a neighborhood of a point on $\partial \D$.

{\bf Example 1.} Let $f(z)=\Bl \frac{1+z}{1-z}\Br^{\frac 12}$, $z\in \D$ where the branch of the square root is chosen such that $f(0)=1$. We have $$ f(z)=\Bl \frac{1-|z|^2}{|1-z|^2}+ \frac{2i \Im z}{|1-z|^2}\Br^{\frac 12}=\Bigr| \frac{1+z}{1-z}\Bigr|^{\frac12}(\cos \frac{\gamma}{2}+ i\sin \frac{\gamma}{2})$$
where $\gamma =\arg \frac{1+z}{1-z}\in (-\frac \pi 2, \frac \pi 2)$, $\tg \gamma =\frac{2\Im z}{1-|z|^2}$, $z\in \D$.

It is clear that both $\Re f$ and $\Im f$ are bounded outside a neighborhood of $1$. Moreover,  $\cos \frac{\gamma}{2}\asymp 1$, because $\frac{\gamma}{2}\in (-\frac{\pi}{4},\frac{\pi}{4})$. Using  elementary trigonometry we get as $|z| \to 1-$, $z\in \D$
\begin{gather*}
  \sin^2 \frac \gamma 2 =\frac12 \Bl 1- \frac{1}{\sqrt{1+\tg ^2 \gamma}}\Br= \\
  =\frac 12 \Bl 1 - \frac{1-|z|^2}{\sqrt{(1-|z|^2)^2+ 4 (\Im z)^2}}\Br  \asymp \frac{|\Im z|^2}{|1-z|^2}.
\end{gather*}
Therefore
$$ \Re f(z)\asymp \frac{1}{|1-z|^{\frac 12}}, \quad \Im f(z)\asymp \frac{|\Im (1-z)|}{|1-z|^{\frac 32}}. $$
We see that conjugate harmonic functions have the same majorant   $(\rho(z))^{-\frac 12}$, $E=\{1\}$.

The problem of estimating the harmonic conjugates for harmonic functions from classes
\begin{equation}\label{e:harm_growth_con}
  |u(z)|\le \frac{D}{\rho_E^{q}(z)}, \quad z\in \D
\end{equation}
arises naturally.

In the sequel, the symbol $C$  with indices stands for some positive constants not necessary the same at each occurrence. The relation $A\lesssim B$ means that there exists a constant $C$ such that $A\le CB$. 

\begin{thrm}
    \label{t:harm_conj}
Let $u$ be a harmonic function in the unit disk, $\tilde u$ be its conjugate function. Let $E=\overline{E} \subset \partial \D$,  $q$,  $\gamma$ be real constants satisfying $q>0$, $\gamma\le q$. If for some $C_0>0$
\begin{equation}\label{e:u_est}
  |u(z)|\le \frac{C_0(1-|z|)^\gamma}{\rho^q(z)},
\end{equation}
then there exists $C_1=C_1(C_0, \lambda, q, \gamma)>0$ such that
\begin{equation}\label{e:v_est}
  |\tilde u(z)|\le C_1 \left\{
                  \begin{array}{ll} \displaystyle
                    \frac{\log \frac{1}{1-|z|}}{\rho^{q-\gamma}(z)}, & q>\gamma\ge 0; \\
                    \frac{(1-|z|)^\gamma}{\rho^{q}(z)}, & q>0>\gamma; \\
                    \log \frac{1}{\rho(z)}, & q=\gamma>0,
                  \end{array}
                \right. \quad \frac 12 \le  |z|<1.
\end{equation}
\end{thrm}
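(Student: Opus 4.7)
The plan is to write $\tilde u(z)$ as the line integral of its gradient from $0$ to $z$ along the radial segment, use $|\nabla\tilde u|=|\nabla u|$ from the Cauchy--Riemann equations, and bound $|\nabla u|$ pointwise via the standard interior gradient estimate for harmonic functions.

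Fix $z=re^{i\theta_0}$ with $\tfrac12\le r<1$ and set $\tilde u(0)=0$. For $w\in\D$ with $|w|\ge\tfrac12$, applying the harmonic gradient estimate on the disk $B(w,(1-|w|)/4)\subset\D$, together with the fact that on this disk both $1-|\zeta|$ and $\rho(\zeta)$ are comparable to their values at $w$ (here we use $\rho(w)\ge 1-|w|$, which holds since $E\subset\partial\D$), produces
\begin{equation*}
|\nabla u(w)|\lesssim \frac{(1-|w|)^{\gamma-1}}{\rho^{q}(w)}.
\end{equation*}
The contribution to $|\tilde u(z)|$ from the initial segment $\{te^{i\theta_0}:0\le t\le\tfrac12\}$ is $O(1)$, since (\ref{e:u_est}) makes $u$ uniformly bounded on $\{|w|\le\tfrac12\}$.

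The key geometric observation is that on the outer segment $w(t)=te^{i\theta_0}$, $t\in[\tfrac12,r]$, one has
\begin{equation*}
\rho\bigl(w(t)\bigr)\ge \tfrac12\max\bigl(1-t,\ \rho(z)\bigr).
\end{equation*}
Indeed, if $r-t\le\rho(z)/2$, then $\rho(w(t))\ge\rho(z)-(r-t)\ge\rho(z)/2$ by the triangle inequality; otherwise $1-t=(1-r)+(r-t)\ge\rho(z)/2$, and $\rho(w(t))\ge 1-|w(t)|$ does the job. I then split the integral at $t^{*}=1-\rho(z)/2$. On $[\tfrac12,t^{*}]$ the integrand $(1-t)^{\gamma-1}\rho^{-q}(w(t))$ is controlled by $(1-t)^{\gamma-q-1}$, whose integral contributes $O(\rho(z)^{\gamma-q})$ when $\gamma<q$ and $O(\log(1/\rho(z)))$ when $\gamma=q$. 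On $[t^{*},r]$ the integrand is controlled by $\rho(z)^{-q}(1-t)^{\gamma-1}$, whose integral is $O(\rho(z)^{\gamma-q})$ for $\gamma>0$, $O(\rho(z)^{-q}\log\tfrac{1}{1-r})$ for $\gamma=0$, and $O(\rho(z)^{-q}(1-r)^{\gamma})$ for $\gamma<0$. Adding the two pieces and using $\rho(z)\ge 1-|z|$ to absorb the weaker term by the stronger one in each regime yields the three alternatives in (\ref{e:v_est}).

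The main obstacle is the geometric lemma for $\rho$ along the radial segment together with the careful bookkeeping in the edge regimes (where $\rho(z)\gtrsim 1$, or $1-r$ and $\rho(z)$ are comparable, so that one of the two sub-ranges is empty); once these are dispatched, the rest is a routine case analysis on the sign of $\gamma$.
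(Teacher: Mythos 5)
Your argument is correct, and it takes a genuinely different route from the paper. The paper represents $F=u+i\tilde u$ by the Schwarz integral over the circle $|w|=R=\frac{1+r}2$, applies the Riemann--Liouville operators $D^{\pm\alpha}$ (with $\alpha=q-\gamma$ or $\alpha=q$) to the kernel, and controls the resulting integral $\int_0^{2\pi}|1-\frac zR e^{-i\theta}|^{-\alpha-1}\rho^{-q}(Re^{i\theta})\,d\theta$ by a layer-cake formula combined with Lemma \ref{l:harm_meas} on the measures $\nu_w^\lambda$ of unions of arcs. You instead combine the interior gradient estimate (which turns \eqref{e:u_est} into $|\nabla u(w)|\lesssim (1-|w|)^{\gamma-1}\rho^{-q}(w)$, using $\rho(w)\ge 1-|w|$ to keep $\rho$ comparable on the ball $B(w,\frac{1-|w|}4)$), the identity $|\nabla\tilde u|=|\nabla u|$, and radial integration governed by the elementary comparison $\rho(te^{i\theta_0})\ge\frac12\max(1-t,\rho(z))$; all of these steps check out, as does the case analysis. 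Your approach is shorter and avoids both the fractional calculus and the measure-theoretic lemma entirely; moreover, in the regime $q>\gamma>0$ it yields the bound $C\rho^{\gamma-q}(z)$ \emph{without} the factor $\log\frac1{1-|z|}$ (the logarithm survives only at $\gamma=0$), which is sharper than \eqref{e:v_est}, is consistent with Example 2, and speaks directly to the paper's open question in Remark 1.3. Two cosmetic points you should make explicit in a write-up: the normalization $\tilde u(0)=0$ (otherwise an additive constant enters $C_1$, as $\Im F(0)$ does in the paper's proof), and the fact that bounding the contribution of the segment $[0,\frac12]$ requires $u$ (hence $\nabla u$) to be bounded on a slightly larger disk, say $|w|\le\frac34$, which \eqref{e:u_est} indeed provides since $\rho(w)\ge\frac14$ there.
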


\begin{remark} If $E=\partial \D$, that is $\rho(z)=1-|z|$, the hyposthesis  $|u(z)|\lesssim (1-|z|)^{-p}$, $p>0$, by Theorem \ref{t:harm_conj} with $q=p/2$, $\gamma=-p/2$ implies $|\tilde u(z)|\lesssim (1-|z|)^{-p}$, and $|u(z)|\lesssim 1$ implies $|\tilde u(z)|\lesssim \log \frac 1{1-|z|}
$, which is known to be sharp  bounds.

\end{remark}
{\bf Example 2.}
Consider the Poisson kernel $P_0(z)$, which is the real part of the Schwarz kernel. Then $E=\{1\}$, and the assumptions of the theorem hold with $q=2$ and $\gamma =1$. It is clear that for the conjugate function $Q(z)=\frac{2 \Im(z-1)}{|1-z|^2}$ we have $|Q(z)|\asymp \frac1{|1-z|}$ as $1-z=e^{i\frac \pi 4t}$, $t\to 0+$. It means that the estimate given by \eqref{e:v_est} in the case $q>\gamma \ge 0$ is sharp up to the factor $\log \frac{1}{1-r}$.

\begin{remark} We do not know whether the factor $\log \frac 1{1-r}$ in \eqref{e:v_est} is necessary.
\end{remark}

\subsection{An application}
We give an application to the growth of Poisson-type integrals. In order to proceed we need the definition of the  Riemann-Liouville fractional integral.
%\section{An application}
The fractional integral $D^{-\alpha}h$ and the fractional derivative $D^{\alpha}h$ of order $\al>0$ for $h \colon (0,1) \to
\R$ are  defined by the formulas \cite{Dj} $$D^{-\al} h(r)=\frac
1{\Ga(\al)}\int _0^r (r-x)^{\al-1} h(x)\, dx,\quad 0<\alpha<1, $$ $$D^0h(r)\equiv
h(r), \quad D^\al h(r)=\frac{d^p}{dr^p} \{ D^{-(p-\al)} h(r)\}, \;
\al\in (p-1;p].$$

%Let $H(\D)$ be the class of harmonic functions in $\D$. 
We put
$u_\al(re^{i\vfi})=r^{-\al} D^{-\al} u(re^{i\vfi})$, where the
fractional integral is taken with respect to the variable $r$.
%We define $B(r,u)=\max \{
%|u(z)|: |z|\le r\}$ for a subharmonic function $u$ in $D$.
The reason of this definition is that $u_\alpha$ is harmonic in $\D$ (\cite[Chap. IX]{Dj}).

Let $$S_{\al}(z)=\Gamma(1+\al)\Bigl(
 \frac 2{(1-z)^{\al+1}} -1\Bigr), \quad P_\al(z)
=\Re S_\al(z).$$

\noi{\bf Remark 2.} Note that $S_0(z)$ is the Schwartz kernel,  $P_0(z)$ is the Poisson kernel;
$P_{\al}(re^{it})=D^\al(r^\al P_0(re^{it}))$.

Our starting  point is the following  two theorems. The first one is a representation theorem.

\medskip\noi
\begin{atheorem}[M.\ Djrbashian] \label{t:dj2} \sl Let $u$ be harmonic in $\D$, $\al
>-1$. Then
\begin{equation} \label{e:ual}
u(re^{i\vfi})=\int_0^{2\pi} P_\al(re^{i(\vfi-\theta)})\,
d\psi(\theta),
\end{equation}
where $\psi\in BV[0,2\pi]$,
 if and only if
$$ \sup_{0<r<1} \int_0^{2\pi} |u_\al(re^{i\vfi})|\, d\vfi<
+\infty.$$
\end{atheorem}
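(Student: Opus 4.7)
The plan is to reduce the theorem to the classical Herglotz--Riesz representation (the $\al=0$ case: a harmonic function $v$ in $\D$ is a Poisson--Stieltjes integral of a $\BV$ measure iff $\sup_{r<1} M_1(r,v)<\ity$) by transferring the representation between $u$ and its auxiliary harmonic companion $u_\al$. The bridge between the two kernels is the identity
\begin{equation*}
r^{-\al}\, D^{-\al}_r \bigl[P_\al(re^{it})\bigr] = P_0(re^{it}), \qquad t\in\R,\; r\in(0,1),
\end{equation*}
which I would verify directly by writing $P_\al=\Re S_\al$, expanding $(1-re^{it})^{-\al-1}$ as a power series in $re^{it}$, applying $D^{-\al}_r$ term by term via the Beta-function identity $D^{-\al}_r[r^k]=\Ga(k+1)r^{k+\al}/\Ga(k+\al+1)$, and resumming to get $D^{-\al}_r S_\al(re^{it})=r^\al S_0(re^{it})$.

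For the necessity direction, starting from $u(re^{i\vfi})=\int_0^{2\pi} P_\al(re^{i(\vfi-\th)})\,d\psi(\th)$ with $\psi\in\BV[0,2\pi]$, I apply $r^{-\al}D^{-\al}_r$ to both sides and use Fubini to commute the Riemann--Liouville integral with the Stieltjes integral (legitimate since $(r-x)^{\al-1}$ is integrable on $[0,r]$ and $P_\al(xe^{it})$ is jointly continuous on $[0,r]\times[0,2\pi]$ for $r<1$). The kernel identity then yields
\begin{equation*}
u_\al(re^{i\vfi})=\int_0^{2\pi} P_0(re^{i(\vfi-\th)})\,d\psi(\th),
\end{equation*}
and positivity of $P_0$ combined with $\frac1{2\pi}\int_0^{2\pi} P_0(re^{i\vfi})\,d\vfi=1$ gives $\int_0^{2\pi}|u_\al(re^{i\vfi})|\,d\vfi \le 2\pi\,V$ uniformly in $r$, where $V$ denotes the total variation of $\psi$.

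For the sufficiency direction, the harmonicity of $u_\al$ (noted in the excerpt) together with the hypothesis $\sup_{r<1}M_1(r,u_\al)<\ity$ permits an application of the classical Herglotz theorem, producing $\psi\in\BV[0,2\pi]$ with $u_\al(re^{i\vfi})=\int_0^{2\pi} P_0(re^{i(\vfi-\th)})\,d\psi(\th)$. I then define the candidate $v(re^{i\vfi})=\int_0^{2\pi} P_\al(re^{i(\vfi-\th)})\,d\psi(\th)$, which is harmonic on $\D$ since $P_\al$ is (as the real part of the analytic $S_\al$) and the integral converges uniformly on compact subsets of $\D$. Running the necessity computation on $v$ produces $v_\al=u_\al$; equivalently $D^{-\al}_r(v-u)\equiv 0$ on every radius, so applying $D^\al_r$ gives $u\equiv v$ in $\D$, the desired representation.

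The principal technical points are the verification of the kernel identity (the direct Beta-function computation sketched above) and the commutation of $D^{-\al}_r$ with the Stieltjes integral, justified by uniform convergence of the power series of $P_\al$ on compact subsets of $\D$ and dominated convergence. A subsidiary point is the left-invertibility $D^\al_r \circ D^{-\al}_r=\mathrm{id}$ on the class of functions that are harmonic in $\D$ and whose fractional integrals vanish appropriately at $r=0$; this is a standard property of the Riemann--Liouville calculus, and it is the only place where care with the initial behaviour at $r=0$ is needed.
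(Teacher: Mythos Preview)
The paper does not supply its own proof of this statement: it is quoted as a known result of M.~Djrbashian (cited from \cite{Dj}) and used as background. There is therefore no in-paper argument to compare against.

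That said, your sketch is correct and is essentially the standard route to Djrbashian's theorem. The kernel identity $r^{-\al}D^{-\al}_r P_\al(re^{it})=P_0(re^{it})$ is exactly the inverse of what the paper records in Remark~2, namely $P_\al(re^{it})=D^\al(r^\al P_0(re^{it}))$, and your Beta-function computation verifies it directly. The reduction to Herglotz--Riesz via $u_\al$ is the intended mechanism; note that the paper's definition $u_\al=r^{-\al}D^{-\al}u$ makes your step ``$v_\al=u_\al$ implies $D^{-\al}(v-u)\equiv 0$'' immediate, and injectivity of $D^{-\al}$ on functions continuous at $0$ (equivalently, left-invertibility $D^\al D^{-\al}=\mathrm{id}$ there) finishes the identification. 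The only place to be slightly careful is the range $-1<\al<0$: then $D^{-\al}$ is a fractional derivative rather than an integral, and your Fubini justification for commuting with the Stieltjes integral should be phrased via the power-series expansion (uniform on compacta) rather than via integrability of $(r-x)^{\al-1}$.
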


The idea of the next result goes back to Hardy and Littlewood \cite{HL} and consists in the observation that there is an iterplay between  the growth of Poisson-Stieltjes integral and the smoothness of the Stieltjes measure. Growth of $M_\infty(r,u)$ where $u$ is of the form \eqref{e:ual} was described in \cite{Ch1}, \cite{ChUMZ}. We define the modulus of continuity of a bounded function $\psi\colon [0, 2\pi]\to \mathbb{R}$ on a set $E\subset [0, 2\pi]$  by $$\omega_E(\delta, \psi)= \sup\{ |\psi(y)-\psi(x)| : |x-y|<\delta, x,y\in E\} ,$$ and  we write $\omega(\delta, \psi)=
\omega_{[0,2\pi]}(\delta, \psi)$ for short.

\begin{atheorem} \label{t:2}  {\sl Let $u$ be harmonic in $\D$, $\al \ge
0$, $0<\ga< 1$ or $\gamma=1$ and $\alpha=0$. Then $u(z)$ has form (\ref{e:ual})
%\begin{equation}
%u(re^{i\vfi})=\frac 1{2\pi} \int_0^{2\pi} P_\alpha(r, \vfi-t) d\psi(t)
%\end{equation}
where   $\psi $ is of bounded variation on $[0,2\pi]$, and $\omega(\delta, \psi)=O(\delta^\gamma)$, $\delta\to 0+$,
if and only if $$ M_\infty(r, u )=O((1-r)^{\gamma -\al
-1}), \quad r\to 1- \;$$ and $$ \sup\limits _{0<r<1} \int
_{0}^{2\pi} |u_\alpha(re^{i\vfi})|\,d\vfi <+\infty.$$ }
\end{atheorem}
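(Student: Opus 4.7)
The plan is to prove the two implications separately, using as the pivot the identity $u_\al(re^{i\vfi})=\int_0^{2\pi} P_0(re^{i(\vfi-\th)})\,d\psi(\th)$ valid whenever $u$ admits the representation \eqref{e:ual}. This follows at once from $P_\al(re^{it})=D^\al(r^\al P_0(re^{it}))$ by applying $r^{-\al}D^{-\al}$ to both sides of \eqref{e:ual} and swapping the operator past the $\th$-integral. Thus $u_\al$ is nothing but the classical Poisson--Stieltjes integral of $\psi$, and the theorem reduces to transferring growth and smoothness properties across the fractional operator $r^{-\al}D^{-\al}$ and across the classical Hardy--Littlewood correspondence for Poisson--Stieltjes integrals.

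\medskip

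For the necessity direction, Theorem \ref{t:dj2} together with Fubini yields $\sup_r\int_0^{2\pi}|u_\al|\,d\vfi\le 2\pi\,\BV(\psi)<\infty$ directly, since $P_0\ge 0$ and $\int_0^{2\pi}P_0\,dt=2\pi$. To bound $M_\infty(r,u)$ I would integrate by parts in \eqref{e:ual}, first subtracting the constant $\psi(\vfi)$ and then exploiting the $2\pi$-periodicity of $\th\mapsto P_\al(re^{i(\vfi-\th)})$ to annihilate the boundary term:
\[
u(re^{i\vfi})=-\int_0^{2\pi}(\psi(\th)-\psi(\vfi))\,\tfrac{\partial}{\partial\th}P_\al(re^{i(\vfi-\th)})\,d\th.
\]
Combining $|\psi(\th)-\psi(\vfi)|\lesssim|\th-\vfi|^\ga$ with the pointwise estimate $|\partial_\th P_\al(re^{i(\vfi-\th)})|\lesssim((1-r)+|\th-\vfi|)^{-\al-2}$, which is immediate from the explicit formula for $S_\al$, and splitting the $\th$-integral at $|\th-\vfi|\asymp 1-r$, both pieces contribute $O((1-r)^{\ga-\al-1})$ provided $\ga<\al+1$. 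The borderline case $\ga=1$, $\al=0$ is treated separately: a Lipschitz $\psi$ is absolutely continuous with $\psi'\in L^\infty$, so $u$ becomes the bounded Poisson integral of $\psi'$.

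\medskip

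For the sufficiency direction, Theorem \ref{t:dj2} applied to $\sup_r\int|u_\al|\,d\vfi<\infty$ produces some $\psi\in\BV[0,2\pi]$ realizing \eqref{e:ual}, and the pivot identity then yields $u_\al(re^{i\vfi})=\int_0^{2\pi}P_0(re^{i(\vfi-\th)})\,d\psi(\th)$. It remains to upgrade $\psi$ to $\om(\de,\psi)=O(\de^\ga)$. I would first push the growth hypothesis on $u$ down to $u_\al$: from $u_\al(re^{i\vfi})=\Ga(\al)^{-1}r^{-\al}\int_0^r(r-x)^{\al-1}u(xe^{i\vfi})\,dx$, with the obvious modification when $\al\ge 1$, the substitution $x=1-(1-r)s$ gives $M_\infty(r,u_\al)=O((1-r)^{\ga-1})$ for $0<\ga<1$. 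Then the classical Hardy--Littlewood theorem applied to the Poisson--Stieltjes integral $u_\al$ converts this growth into $\om(\de,\psi)=O(\de^\ga)$; the corner case $\ga=1$, $\al=0$ reduces to the classical assertion that a bounded Poisson--Stieltjes integral of a BV measure comes from a Lipschitz boundary function.

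\medskip

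The main obstacle I anticipate is the fractional-calculus bookkeeping: justifying rigorously that $r^{-\al}D^{-\al}$ can be brought inside the $\th$-integral to establish the pivot identity for all $\al\ge 0$, and verifying the growth transfer $M_\infty(r,u)=O((1-r)^{\ga-\al-1})\Rightarrow M_\infty(r,u_\al)=O((1-r)^{\ga-1})$ uniformly in $\al\ge 0$, including the higher-order cases that require iterated differentiation. The integration-by-parts in the necessity direction likewise requires a Jordan decomposition of $\psi$ and careful treatment of jump points for the periodicity argument to be fully legitimate.
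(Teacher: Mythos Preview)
The paper does not contain a proof of this statement. Theorem~\ref{t:2} is quoted as a known background result from \cite{Ch1}, \cite{ChUMZ}; only Theorems~\ref{t:harm_conj} and~\ref{t:appl} are proved in the paper. So there is no ``paper's own proof'' to compare against.

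Your sketch is nonetheless correct. The necessity direction (integrate by parts against $\partial_\th P_\al$, use $|\partial_\th P_\al|\lesssim((1-r)+|\th-\vfi|)^{-\al-2}$, split at $|\th-\vfi|\asymp 1-r$) is exactly what the paper carries out for the localized analogue, Theorem~\ref{t:appl}(i). One small correction: the boundary term after integration by parts is not annihilated; it equals $(\psi(2\pi)-\psi(0))P_\al(re^{i\vfi})$, which is unbounded as $r\to1$ if $\vfi$ is near $0$. The paper avoids this by first shifting the range of integration to $[-\pi+\vfi,\pi+\vfi]$, so that the boundary term becomes $(\psi(2\pi)-\psi(0))P_\al(-r)=O(1)$.

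For sufficiency your reduction---transfer the growth $M_\infty(r,u)=O((1-r)^{\ga-\al-1})$ to $M_\infty(r,u_\al)=O((1-r)^{\ga-1})$ through the fractional integral, then quote the classical $\al=0$ Hardy--Littlewood correspondence---is valid and efficient. (Incidentally, the formula $D^{-\al}h(r)=\Ga(\al)^{-1}\int_0^r(r-x)^{\al-1}h(x)\,dx$ already holds for all $\al>0$, so no ``obvious modification'' is needed when $\al\ge 1$.) It is worth noting that the paper, when it proves the sharper non-radial statement Theorem~\ref{t:appl}(ii), does \emph{not} reduce to Hardy--Littlewood: instead it bounds the harmonic conjugate via Theorem~\ref{t:harm_conj}, forms the primitive $\Phi(z)=\int_0^z(u+i\tilde u)\,d\zeta$, shows $\Phi$ extends continuously to $\partial\D$ with the required modulus of continuity, and recovers $\psi$ as the boundary limit of $\int_0^\th u(Re^{i\sigma})\,d\sigma$. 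Your route is shorter for the purely radial Theorem~\ref{t:2}; the paper's conjugate-function route is what is needed once the majorant depends on $\rho_E(z)$ rather than $1-|z|$.
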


Asymptotic behavior of $M_p(r,u)$ where $1<p<\infty$ is described in \cite{Chmm}, see also \cite{ChIJM}.
The disadvantage of the mentioned results is that they do not describe local growth of a harmonic function.

\medskip \noi
\begin{thrm}
\label{t:appl}
\sl Let  $0<\ga< 1 $, $\alpha\ge 0$,  and $u(z)$ have the form
\begin{equation} \label{e:rep1}
u(re^{i\vfi})=\int_0^{2\pi} P_\alpha(re^{i\vfi} \bar \zeta) d\mu(\zeta)
\end{equation}
where  $\mu$ is a real finite Borel measure on $\partial \D$,
%  $\psi $ is of bounded variation on $[0,2\pi]$,
 $\supp \mu=E\subset \partial \D$.
\begin{itemize}
  \item [(i)] If $\omega_E(\delta, \mu)=O(\delta^\gamma)$, $\delta \to 0+$, then
  \begin{equation}\label{e:u_growth}
    |u(z)|\le \frac{C}{\rho^{\alpha+ 1-\gamma}(z)};  \end{equation}
  \item [(ii)] If \eqref{e:u_growth} holds, then
   $$\omega_E(\delta, \mu)=O\Bl \delta^\gamma \log \frac{1}{\delta}\Br, \quad \delta\to 0+.$$
\end{itemize}
% and $\psi
%\in\La_\gamma$, if and only if $$ B(r, u )=O((1-r)^{\gamma
%-1}), \quad r\uparrow 1 \;$$ and $$ \sup\limits _{0<r<1} \int
%_{0}^{2\pi} |u(re^{i\vfi})|\,d\vfi <+\infty.$$ }
\end{thrm}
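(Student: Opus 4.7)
\textbf{Part (i)} is direct. Fix $z$ with $\delta_0 = \rho(z)$ small, pick $\zeta^* \in E$ with $|z-\zeta^*| = \delta_0$, and partition $\partial\D$ into the central arc $J_0 = \{\zeta : |\zeta - \zeta^*| \le \delta_0\}$ and the dyadic annular arcs $J_k = \{\zeta : 2^{k-1}\delta_0 < |\zeta - \zeta^*| \le 2^k\delta_0\}$ for $k \ge 1$. On $J_k$ one has $|1 - z\bar\zeta| \asymp 2^k\delta_0$, so the pointwise bound $|P_\alpha(w)| \lesssim |1-w|^{-(\alpha+1)}$ gives $|P_\alpha(z\bar\zeta)| \lesssim (2^k\delta_0)^{-(\alpha+1)}$. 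Since $\supp\mu \subset E$, the hypothesis $\omega_E(\delta, \mu) = O(\delta^\gamma)$ forces $|\mu|(J_k) \lesssim (2^k\delta_0)^\gamma$. Summing the series (convergent because $\gamma < 1 \le \alpha+1$) yields $|u(z)| \lesssim \delta_0^{\gamma - \alpha - 1} \asymp \rho(z)^{-(\alpha+1-\gamma)}$, which is \eqref{e:u_growth}.

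\textbf{Part (ii)} is where Theorem \ref{t:harm_conj} enters. Apply it to $u$ with $q = \alpha+1-\gamma > 0$ and numerator exponent $0$, landing in the case $q > \gamma \ge 0$ of \eqref{e:v_est}. This produces $|\tilde u(z)| \lesssim \rho(z)^{-(\alpha+1-\gamma)}\log\frac{1}{1-|z|}$, and hence the analytic function
\[
F(z) = \int_{\partial\D} S_\alpha(z\bar\zeta)\,d\mu(\zeta),
\]
whose real part is $u$, satisfies the same growth bound up to an imaginary constant.

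Next, recover $\mu([\zeta_1, \zeta_2])$ for $\zeta_1, \zeta_2 \in E$ with $|\zeta_1 - \zeta_2| = \delta$ from $F$. For $\alpha = 0$ this is the Plemelj--Sokhotski jump formula for the Schwartz integral, expressing $\mu([\zeta_1, \zeta_2])$ as a contour integral of $F$ along a curve $\Gamma_\delta \subset \D$ joining small neighborhoods of $\zeta_1, \zeta_2$ while staying at distance $\asymp \delta$ from $\partial\D$; for $\alpha > 0$ the same scheme applies after an $\alpha$-fold Riemann--Liouville integration reducing $(1-z)^{-(\alpha+1)}$ to the Cauchy kernel (in the spirit of the representation \ref{t:dj2}). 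On $\Gamma_\delta$, $\rho(w)$ and $1-|w|$ are both $\gtrsim \delta$, so $|F(w)| \lesssim \delta^{-(\alpha+1-\gamma)}\log(1/\delta)$. Combining this with the total geometric weight $\asymp \delta^{\alpha+1}$ of $\Gamma_\delta$ (arc length times the $\alpha$-fold factor from the kernel reduction) gives $|\mu([\zeta_1, \zeta_2])| \lesssim \delta^{\alpha+1}\cdot\delta^{-(\alpha+1-\gamma)}\log(1/\delta) = \delta^\gamma \log(1/\delta)$. The delicate step is this recovery when $\alpha > 0$: making the Plemelj-type formula quantitative and tracking the correct $\delta^{\alpha+1}$ weight in the geometry of $\Gamma_\delta$. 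The $\log(1/\delta)$ in the conclusion is directly inherited from the $\log$ factor in Theorem \ref{t:harm_conj}, whose necessity is noted as open in the remark following that theorem.
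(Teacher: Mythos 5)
Your overall strategy matches the paper's in spirit (estimate the conjugate via Theorem~\ref{t:harm_conj}, then recover increments of $\mu$ by integrating $F$ over a contour at depth $\asymp\delta$), but two steps would fail as written. In part (i), you deduce $|\mu|(J_k)\lesssim (2^k\delta_0)^\gamma$ from $\omega_E(\delta,\mu)=O(\delta^\gamma)$. The hypothesis controls $|\psi(y)-\psi(x)|$ for $\psi(\theta)=\mu([0,\theta))$, i.e.\ the \emph{signed} measure of arcs, not the total variation $|\mu|(J_k)$: since $\mu$ is only a real measure, an arc of length $\delta$ carrying many atoms of alternating sign can have $|\mu|(J_k)\asymp 1$ while every increment of $\psi$ is $O(\delta^\gamma)$. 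The paper avoids this by integrating by parts first, so that only the differences $\psi(t)-\psi(\varphi)$ appear against $\frac{\partial}{\partial t}P_\alpha$, which is then estimated by $\min\{(1-r)^{-\alpha-2},|t-\varphi|^{-\alpha-2}\}$ and split at $|t-\varphi|=\rho(z)$; your dyadic decomposition is fine once applied to that integrated-by-parts expression rather than to $\int |P_\alpha|\,d|\mu|$.

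In part (ii) the case $\alpha>0$ is not actually proved: the claim that $\Gamma_\delta$ carries a ``geometric weight $\asymp\delta^{\alpha+1}$'' from an ``$\alpha$-fold kernel reduction'' is precisely the missing argument (the arc length of $\Gamma_\delta$ is only $\asymp\delta$, and the Riemann--Liouville integration acts radially, not along $\Gamma_\delta$, so it is unclear what the proposed contour integral of $F$ computes). The paper reverses your order of operations and thereby avoids the issue: it first fractionally integrates $u$, using Lemma~\ref{l:gaint} to get $|u_\alpha(z)|\lesssim \rho^{\gamma-1}(z)$, observes that $u_\alpha=\int_{\partial\D}P_0(z\bar\zeta)\,d\mu(\zeta)$ is an $\alpha=0$ Poisson--Stieltjes integral of the \emph{same} measure $\mu$, and only then applies Theorem~\ref{t:harm_conj} (with $q=1-\gamma$, $\gamma_{\mathrm{thm}}=0$) and the contour argument. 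For $\alpha=0$ your sketch does agree with the paper, where the ``quantitative Plemelj'' step is carried out via the primitive $\Phi(z)=\int_0^z F(\zeta)\,d\zeta$, which extends continuously to $\overline{\D}$ because $\gamma>0$, together with the identification $\psi(\theta)=\lim_{r\uparrow1}\int_0^\theta u(re^{i\phi})\,d\phi$ at continuity points; to be complete you would still need to spell out the contribution of the two radial legs reaching $\zeta_1,\zeta_2\in E$ (where $\rho(re^{i\varphi})=1-r$ and the integral $\int_{1-h}^1(1-r)^{\gamma-1}\log\frac1{1-r}\,dr\lesssim h^\gamma\log\frac1h$ converges only because $\gamma>0$).
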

Given a set $E\subset \partial \D$ we define the order of growth $$\sigma_{\infty, E}[u]=\inf \{p\ge 0: \sup_{z\in \D} |u(z)|\rho_E^p(z)<\infty\}, \quad \inf \varnothing:=+\infty.$$
We say that a function $\psi \in BV[0,2\pi]$ belongs to $\Lambda_\gamma^*(E)$, $\gamma\in [0,1]$, if for arbitrary $\varepsilon>0$ we have $\omega_E(\delta)=O(\delta^{\gamma-\varepsilon})$, $\delta\to+$. In particular, an arbitrary $\psi \in BV[0,2\pi]$ belongs to $\Lambda_0^\gamma$, and the Lipschitz functions belong to $\Lambda_1^*$.

\begin{cor}
  Under assumptions Theorem \ref{t:appl} let $\psi=\mu([0,\theta))$, $\theta\in [0, 2\pi]$, $F=\{\theta\in [0, 2\pi]: e^{i\theta}\in E\}$. Then   $\psi \in \Lambda_\gamma^*(F)$ if and only if $\sigma_{\infty,E}[u]\le \alpha+1-\gamma$.
\end{cor}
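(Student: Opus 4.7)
The plan is to derive the corollary directly from Theorem~\ref{t:appl} by applying it with the exponent $\gamma$ perturbed by an arbitrary $\varepsilon>0$, and absorbing the logarithmic factor that appears in part~(ii) into a second infinitesimal loss in the exponent. First, I would record the tautology that, with $\psi(\theta)=\mu([0,\theta))$ and $F=\{\theta\in[0,2\pi]:e^{i\theta}\in E\}$, the moduli $\omega_F(\delta,\psi)$ and $\omega_E(\delta,\mu)$ coincide (up to the obvious passage from arc length on $\partial\D$ to the parameter $\theta$), so the two formulations are interchangeable.

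For the forward direction, assume $\psi\in\Lambda_\gamma^*(F)$. By definition, for every sufficiently small $\varepsilon>0$ one has $\omega_F(\delta,\psi)=O(\delta^{\gamma-\varepsilon})$, so Theorem~\ref{t:appl}(i) applied with the exponent $\gamma-\varepsilon\in(0,1)$ in place of $\gamma$ yields $|u(z)|\lesssim \rho^{-(\alpha+1-\gamma+\varepsilon)}(z)$. Hence $\sigma_{\infty,E}[u]\le \alpha+1-\gamma+\varepsilon$, and letting $\varepsilon\to 0+$ gives $\sigma_{\infty,E}[u]\le \alpha+1-\gamma$.

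Conversely, suppose $\sigma_{\infty,E}[u]\le \alpha+1-\gamma$. Then for every $\varepsilon>0$ the definition of the infimum provides $|u(z)|\lesssim \rho^{-(\alpha+1-(\gamma-\varepsilon))}(z)$, which is precisely the growth hypothesis of Theorem~\ref{t:appl}(ii) with $\gamma-\varepsilon$ in place of $\gamma$. That part therefore delivers $\omega_E(\delta,\mu)=O(\delta^{\gamma-\varepsilon}\log(1/\delta))=O(\delta^{\gamma-2\varepsilon})$ as $\delta\to 0+$. Since $\varepsilon>0$ is arbitrary, this says exactly that $\psi\in\Lambda_\gamma^*(F)$.

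I do not expect a serious obstacle. The content of the corollary is just Theorem~\ref{t:appl} rephrased on a scale of exponents, and the only point that requires care is that the $\log(1/\delta)$ slack produced in part~(ii) be absorbed by the freedom to shrink $\varepsilon$, which is automatic. A minor bookkeeping issue is to keep $\gamma-\varepsilon$ in the admissible range of Theorem~\ref{t:appl}: for $\gamma\in(0,1)$ any small $\varepsilon$ works, while the endpoints $\gamma\in\{0,1\}$ are either vacuous (every $\psi\in BV$ lies in $\Lambda_0^*(F)$) or handled by approaching $\gamma=1$ from below. No new ideas beyond Theorem~\ref{t:appl} appear to be necessary.
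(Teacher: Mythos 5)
Your argument is correct and is exactly the intended derivation: the paper states this corollary without a separate proof, treating it as the immediate consequence of Theorem \ref{t:appl} obtained by running parts (i) and (ii) with $\gamma-\varepsilon$ in place of $\gamma$ and absorbing the $\log\frac1\delta$ factor into a further $\varepsilon$-loss. Your bookkeeping (the up-set property of $\{p:\sup|u|\rho^p<\infty\}$ and keeping $\gamma-\varepsilon\in(0,1)$) is the only care required, and you handle it.
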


\section{Some lemmas}

Given an arc $l$ on the unit circle, a point $w\in \D$ and a positive constant $\lambda$ we define the measure $\nu_w^\lambda(l)$ by
$$ \nu_w^\lambda(l):=\int_l \frac{|d\zeta|}{|\zeta-w|^\lambda}.$$

\begin{remark}
  For $\lambda=2$ the quantity $\nu_w^\lambda(l) (1-|w|^2) $ equals the harmonic measure of $l$. The measure $\nu_w^\lambda$ can be continued on all Borel subsets of $\partial \D$ in the standard way.
\end{remark}

The following lemma  plays an important role in our arguments.

\begin{lem} \label{l:harm_meas} Let $E^*=\bigcup_{k=1}^N [\alpha_k, \beta_k]$, where $\beta_k\le \alpha_{k+1}$, $k\in \{1, \dots, N-1\}$, $\beta_N\le \alpha_1+2\pi$, $E=\{ e^{i\theta} : \theta \in E^*\}$, $\lambda>0$, and $w\in \D$.  Then there exists a constant $C=C(\lambda)$ such that
\begin{equation}\label{e:omeg_lam_est}
  \nu_w^\lambda(E)\le C \left\{
                             \begin{array}{ll}
                               {(\rho(w))^{1-\lambda}} - {(\rho(w)+\frac{|E^*|}2)^{1-\lambda}} , & \lambda>1; \\
                               \log \Bl 1+\frac{|E^*|}{\rho(w)}\Br, & \lambda=1; \\
                               (\rho(w)+|E^*|/2) ^{1-\lambda}- (\rho(w)) ^{1-\lambda}, & \lambda<1.
                             \end{array}
                           \right.
\end{equation}
\end{lem}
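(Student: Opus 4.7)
The plan is to reduce the integral defining $\nu_w^\lambda(E)$ to a one-dimensional integral in the arclength parameter measured from the closest point of $E$ to $w$, and then to integrate explicitly. The case $|w|\le 1/2$ is trivial because then $|e^{i\theta}-w|\asymp 1$ and both sides of \eqref{e:omeg_lam_est} are comparable to $|E^*|$ with constants depending only on $\lambda$; I therefore concentrate on $|w|\ge 1/2$.

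Set $\tau=\arg w$ and choose $\theta^*\in E^*$ minimizing the arclength distance $\|\theta-\tau\|$ over $\theta\in E^*$, so that $\rho(w)=|e^{i\theta^*}-w|$. The key technical step is the equivalence
\begin{equation*}
|e^{i\theta}-w|\asymp \rho(w)+\|\theta-\theta^*\|,\qquad \theta\in E^*,
\end{equation*}
with absolute constants. This follows from the identity $|e^{i\theta}-w|^2=(1-|w|)^2+4|w|\sin^2\tfrac{\theta-\tau}{2}$, which for $|w|\ge 1/2$ yields $|e^{i\theta}-w|\asymp (1-|w|)+\|\theta-\tau\|$, combined with a two-case analysis. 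If the shorter arc from $\theta$ to $\theta^*$ does not contain $\tau$, then $\|\theta-\tau\|=\|\theta^*-\tau\|+\|\theta-\theta^*\|$ and the equivalence is immediate from $\rho(w)\asymp(1-|w|)+\|\theta^*-\tau\|$. If it does contain $\tau$, then the minimality $\|\theta^*-\tau\|\le\|\theta-\tau\|$ gives $\|\theta-\theta^*\|=\|\theta-\tau\|+\|\theta^*-\tau\|\le 2\|\theta-\tau\|$, and both sides are again comparable.

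With this equivalence in hand, the integrand depends only on $\|\theta-\theta^*\|$ up to constants and is monotone decreasing in it. Since the integral of such a radially decreasing function over a subset of $\partial\D$ of fixed measure $|E^*|$ is maximized when the subset is the arc of length $|E^*|$ centered at $\theta^*$, a rearrangement argument yields
\begin{equation*}
\nu_w^\lambda(E)\lesssim\int_{E^*}\frac{d\theta}{(\rho(w)+\|\theta-\theta^*\|)^\lambda}\le 2\int_0^{|E^*|/2}\frac{ds}{(\rho(w)+s)^\lambda}.
\end{equation*}
Direct evaluation of the last integral in the three cases $\lambda>1$, $\lambda=1$, $\lambda<1$ reproduces precisely the three bounds of \eqref{e:omeg_lam_est}. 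The main obstacle I anticipate is the case analysis in the key equivalence: one has to use both the minimality of $\theta^*$ and the fact that $\tau$ need not belong to $E^*$, so that $\theta^*$ may lie on either arc-wise side of $\tau$; the rearrangement step and the explicit integration afterwards are routine.
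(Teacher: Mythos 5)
Your proof is correct and follows essentially the same route as the paper's: both arguments reduce $\nu_w^\lambda(E)$ by a monotone rearrangement to the single integral $2\int_0^{|E^*|/2}(\rho(w)+s)^{-\lambda}\,ds$ over an arc anchored at the point of $E$ nearest to $w$, and then integrate explicitly in the three cases; the paper implements the rearrangement by sliding the intervals $[\tilde\alpha_k,\tilde\beta_k]$ along each half-circle using the monotonicity of $t\mapsto|e^{it}-s|^{-\lambda}$ and then bounding $|e^{it}-s|$ from below by a linear function of $t$, which is exactly your bathtub step in concrete form. One small imprecision: in the first case of your key equivalence the identity $\|\theta-\tau\|=\|\theta^*-\tau\|+\|\theta-\theta^*\|$ can fail (take $\theta$ and $\theta^*$ on opposite sides of $\tau$ with the short arc joining them passing through the antipode of $\tau$); this is harmless, because the equivalence $|e^{i\theta}-w|\asymp\rho(w)+\|\theta-\theta^*\|$ for $\theta\in E^*$ needs no case analysis at all --- the upper bound is the triangle inequality $|e^{i\theta}-w|\le\|\theta-\theta^*\|+\rho(w)$, and the lower bound follows from $|e^{i\theta}-w|\ge\rho(w)$ together with $\|\theta-\theta^*\|\le\tfrac{\pi}{2}\,|e^{i\theta}-e^{i\theta^*}|\le\tfrac{\pi}{2}\bigl(|e^{i\theta}-w|+\rho(w)\bigr)\le\pi\,|e^{i\theta}-w|$.
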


\begin{proof}[Proof of Lemma \ref{l:harm_meas}]
  We write $\zeta=e^{it}$, $w=s e^{i\theta}$, $a_k=e^{i\alpha_k}$, $b_k=e^{i\beta_k}$, $k\in \{1, \dots, N\}$. Let $\rho(w)=\min _{\zeta\in E} |\zeta-w|=|\zeta_0-w|$, and $\zeta_0=e^{i\theta_0}$.  Then
\begin{equation}\label{e:omega_form}
\nu_w^\lambda(E)=\sum_{k=1}^N \int_{\alpha_k}^{\beta_k} \frac{dt}{|e^{it}-s e^{i\theta}|^\lambda}=\sum_{k=1}^N \int_{\tilde\alpha_k}^{\tilde\beta_k} \frac{dt}{|e^{it}-s|^\lambda},
\end{equation}
where $\tilde\alpha_k=\alpha_k-\theta$, $\tilde \beta_k=\beta_k-\theta$.
Dividing some segments  $[\tilde \alpha_j, \tilde\beta_j]$ onto two segments, renumerating and shifting them on  a multiple value of $2\pi$ we may achieve  that
$$ \bigcup_{k=1}^{N_0} [\tilde \alpha_k, \tilde\beta_k]\subset [0, \pi], \quad \bigcup_{k=N_0+1}^{N} [\tilde \alpha_k, \tilde\beta_k]\subset [\pi, 2\pi]$$
for some integer number $N_0$, $0\le N_0<N$, $\tilde\beta_k\le \tilde \alpha_{k+1}$, $k\in \{1, \dots, N-1\}$.

Since $\frac{1}{|e^{it}-s|^\lambda}$ is  decreasing as a function of $t$ on $[0,\pi]$ for $s\in (0,1)$, we have
\begin{equation}\label{e:lower_interval_est}
  \sum_{k=1}^{N_0}  \int_{\tilde\alpha_k}^{\tilde\beta_k} \frac{dt}{|e^{it}-s|^\lambda}\le  \int_{\tilde\alpha_1}^{\tilde\alpha_1+\sum_{k=1}^{N_0}(\tilde\beta_k-\tilde\alpha_k)} \frac{dt}{|e^{it}-s|^\lambda}.
\end{equation}
Similarly we obtain
\begin{equation}\label{e:upper_interval_est}
  \sum_{k=N_0+1}^{N}  \int_{\tilde\alpha_k}^{\tilde\beta_k} \frac{dt}{|e^{it}-s|^\lambda}\le  \int_{\tilde\beta_N}^{\tilde\beta_N-\sum_{k=N_0+1}^{N}(\tilde\beta_k-\tilde\alpha_k)} \frac{dt}{|e^{it}-s|^\lambda}.
\end{equation}

 Due to our hypothesis  $\theta_0-\theta=\tilde{\alpha}_1 (\text{\rm mod} 2\pi)$ or $\theta_0-\theta=\tilde{\beta}_N(\text{\rm mod} 2\pi)$. We may assume that
 the first equality holds. Then, taking into account that $|e^{it}-s|=|e^{i(2\pi-t)}-s|$, we obtain from \eqref{e:omega_form}--\eqref{e:upper_interval_est}
\begin{gather}\nonumber
  \nu_w^\lambda(E)\le \biggl( \int_{\tilde\alpha_1}^{\tilde\alpha_1+\sum_{k=1}^{N_0}(\tilde\beta_k-\tilde\alpha_k)} + \int_{\tilde\beta_N}^{\tilde\beta_N-\sum_{k=N_0+1}^{N}(\tilde\beta_k-\tilde\alpha_k)} \biggr)\frac{dt}{|e^{it}-s|^\lambda} \le \\
  \le 2 \int_{\tilde \alpha_0}^{\tilde\alpha_0 +\frac 12 \sum_{k=1}^N (\beta_k-\alpha_k)} \frac{dt}{|e^{it}-s|^\lambda}\label{e:omega_main_est}
\end{gather}
We denote $\delta=\frac 12 |E^*|$. Using the elementary relationship $x^2+y^2\le (x+y)^2\le 2(x^2+y^2)$, $x,y\ge 0$ and standard estimates we deduce for $\frac12\le s<1$
\begin{gather}\nonumber
\int_{\tilde \alpha_0}^{\tilde \alpha_0+\delta} \frac{dt}{|e^{it}-s|^\lambda}=\int_{\tilde\alpha_0}^{\tilde \alpha_0+\delta }\frac{dt}{((4s\sin^2 t/2+(1-s)^2)^{\frac{\lambda}{2}}}\le \\ \le \int_{\tilde\alpha_0}^{\tilde \alpha_0+\delta}\frac{dt}{(\frac{2}{\pi^2}t^2+(1-s)^2)^{\frac{\lambda}{2}}}  \le
\int_{\tilde\alpha_0}^{\tilde \alpha_0+\delta }\frac{2^{\lambda/2}dt}{(\frac{\sqrt{2}}{\pi} t+(1-s))^{\lambda}} .\label{e:nu_la_gen}
\end{gather}
The latter integral can be computed explicitly depending on $\lambda$. We start with the case $\lambda>1$.
Then \eqref{e:omega_main_est}, \eqref{e:nu_la_gen} imply 
\begin{gather}\nonumber
   \nu_w^\lambda(E)\le 2\int_{\tilde\alpha_0}^{\tilde \alpha_0+\delta} \frac{dt}{|e^{it}-s|^\lambda} \le \\ \le \nonumber  C(\lambda)\Bigl( \frac{1}{(\tilde \alpha_0+1-s)^{\lambda-1}} -\frac{1}{(\tilde \alpha_0+\delta+1-s)^{\lambda-1}}\Bigr) = \\ = \nonumber
 \frac{C(\lambda)}{(\tilde \alpha_0+1-s)^{\lambda-1}} \biggl( 1- \frac{1}{\Bl 1+ \frac{\delta}{\tilde \alpha_0+1-s} \Br^{\lambda-1}} \biggr) \le \\ \le
 \frac{C(\lambda)}{\rho(w)^{\lambda-1}} \biggl( 1- \frac{1}{\Bl 1+ \frac{|E^*|}{2\rho(w)}\Br^{\lambda-1}} \biggr)=
\frac{C(\lambda)}{\rho(w)^{\lambda-1}}- \frac{C(\lambda)}{(\rho(w)+ \frac{|E^*|}{2})^{\lambda-1}}  \label{e:omega_la_final}
\end{gather}
as required.

Let $\lambda\in (0,1)$. Similarly, we deduce
\begin{gather}\nonumber
   \nu_w^\lambda(E)\le  C(\lambda)\Bigl((\tilde \alpha_0+\delta+1-s)^{1-\lambda} - (\tilde \alpha_0+1-s)^{1-\lambda}\Bigr) = \\ = \nonumber
C(\lambda) {(\tilde \alpha_0+1-s)^{1-\lambda}} \biggl( {\Bl 1+ \frac{\delta}{\tilde \alpha_0+1-s} \Br^{1-\lambda}}-1  \biggr) \le \\ \le \nonumber
 C(\lambda) {\rho(w)^{1-\lambda}} \biggl( {\Bl 1+ \frac{|E^*|}{2\rho(w)}\Br^{1-\lambda}}-1 \biggr)=\\ =C(\lambda)((\rho(w)+|E^*|/2) ^{1-\lambda}- (\rho(w)) ^{1-\lambda}) \label{e:omega_la_final'}
\end{gather}
Finally, for $\lambda=1$ we get
$$ \nu_w^1(E)\lesssim \log \Bl \frac{\tilde{\alpha}_0+\delta+1-s}{\tilde{\alpha}_0+1-s} \Br \lesssim \log \Bl 1+ \frac{|E^*|}{2\rho(w)}\Br .$$
The assertion of the theorem follows from \eqref{e:omega_la_final}, \eqref{e:omega_la_final'} and the latter estimate.
\end{proof}

\section{Proofs of the theorems}

\begin{proof}[Proof of Theorem \ref{t:harm_conj}]
  We start we the case $\gamma< q$.  We write $F(z):=u(z)+i\tilde u(z)$, so $F$ is analytic in $\D$. Then
\begin{gather}\nonumber
  F(z)=\frac{1}{2\pi} \int_{0}^{2\pi} \frac{Re^{i\theta}+z}{Re^{i\theta}-z} u(Re^{i\theta})\, d\theta +i \Im F(0)=\\ = \frac{1}{2\pi} \int_{0}^{2\pi} \frac{1+ \frac zR  e^{-i\theta}}{1-\frac zR e^{-i\theta}} u(Re^{i\theta})\, d\theta +i \Im F(0).\label{e:F_repres}
  \end{gather}
We are going to apply Djrbashian the operator of fractional derivation  $D^\alpha (r^\alpha F(re^{i\varphi}))$ where $D^\alpha$ is the Riemann-Liouville operator of order  $\alpha  >0$. It is known (\cite[p.577]{Dj}) that $D^\alpha(r^\alpha S_0(re^{i\varphi}))=S_\alpha(re^{i\varphi})$, where $S_{\al}(z)=\Gamma(1+\alpha)\Bigl(
 \frac 2{(1-z)^{\alpha+1}} -1\Bigr)$.
%P_q(z) =\Re S_q(z), \quad S_q(0)=\Gamma(q+1).$$
Note that $S_0$ is the Schwarz  kernel. The same arguments show that $D^\alpha(r^\alpha S_0(\frac rr e^{i\varphi}))=S_\alpha(\frac rR e^{i\varphi})$.

Taking in account the  equality $D^\alpha(r^\alpha)=\Gamma (1+\alpha)$, $\alpha>0$,  we get
\begin{gather*}F_\alpha(re^{i\varphi}):= D^\alpha (r^\alpha F(re^{i\varphi})) =\\ =\frac1{2\pi}\int_0^{2\pi} S_\alpha\Bigl(\frac  zR e^{-i\theta}\Bigr) u(Re^{i\theta})\, d\theta + i \Im F(0) \Gamma(1+\alpha).\end{gather*}
Then
\begin{gather}\nonumber
 | F_\alpha(z)|\le \frac{C_0\Gamma(\alpha+1)}{2\pi}\int_0^{2\pi} \Bl  \frac 2{|1- \frac zR e^{-i\theta}|^{\alpha+1}}+ 1 \Br \frac{(1-R)^\gamma}{\rho^q(Re^{i\theta})} \, d\theta +\\ + |\Im F(0)|\Gamma(\alpha+1)\le  \nonumber  \\ \le
  C (1-R)^\gamma \int_0^{2\pi} \frac { d\theta}{|1- \frac zR e^{-i\theta}|^{\alpha+1}\rho^q(Re^{i\theta})} +C.\label{e:F_al_est}
\end{gather}

%Given an arc $l$ on the unit circle, a point $w\in \D$ and a positive constant $\lambda$ we define the measure $\nu_w^\lambda(l)$ by
%$$ \nu_w^\lambda(l):=\int_l \frac{|d\zeta|}{|\zeta-w|^\lambda}.$$
%
%\begin{remark}
%  For $\lambda=2$ the quantity $\nu_w^\lambda(l) (1-|w|^2) $ equals the harmonic measure of $l$. The measure $\nu_w^\lambda$ can be continued on all Borel subsets of $\partial \D$ in the standard way.
%\end{remark}

We write $$ \nu_w^\lambda(\theta):=\nu_w^\lambda([0, \theta))=\int_0^\theta \frac{dt}{|e^{it}-w|^\lambda}.$$
Due to `layer cake representation' (\cite[Theorem 1.13]{LeLo}) we have
\begin{gather}\nonumber
  \int_{0}^{2\pi}\frac { d\theta}{|1- \frac zR e^{-i\theta}|^{\alpha+1}\rho^q(Re^{i\theta})}=\int_0^{2\pi} \frac{d\nu_{z/R}^{\alpha+1}(\theta)}{\rho^q(Re^{i\theta})}= \\ = \nonumber
  q\int_{0}^{\infty} y^{q-1} \nu_{z/R}^{\alpha+1}\Bl \{ \theta: \frac{1}{\rho(Re^{i\theta})}>y\} \Br \, dy=\\ = \nonumber
  q\int_{0}^{\frac1{1-R}} y^{q-1} \nu_{z/R}^{\alpha+1}\Bl \Bigl\{ \theta: \rho(Re^{i\theta})<\frac 1y\Bigr\} \Br \, dy\le   \\
\le C(q, \lambda, \alpha) \int_{0}^{\frac1{1-R}} \frac{y^{q-1}}{\rho^\alpha(z/R)}\, dy \le \frac{C}{(1-R)^q\rho^\alpha(z/R)}, \quad  \frac R2\le |z|<R.
\label{e:layer}
\end{gather}
Substituting this estimate into \eqref{e:F_al_est} we get
$$ |F_\alpha(z)|\le \frac{C}{(1-R)^{q-\gamma}\rho^\alpha(z/R)}, \quad  \frac R2\le |z|<R.$$

We choose $R=(1+r)/2$, $|z|=r$, and suppose that $r\in [1/2, 1)$.  We then  have
\begin{gather} \nonumber\allowdisplaybreaks
  |F(re^{i\varphi})|=|r^{-\alpha} D^{-\alpha} F_\alpha(re^{i\varphi})|=\frac{r^{-\alpha}}{\Gamma(\alpha)}  \Bigl| \int_0^r (r-t)^{\alpha-1} F_\alpha(te^{i\varphi})\, dt\Bigr| \le  \\ \le
C  \int_0^r \frac{(r-t)^{\alpha-1}\, dt}{\Bl\frac{1-t}{2}\Br^{q-\gamma}\rho^\alpha \bigl( \frac{2t e^{i\varphi}}{1+t}\bigr)}\le  \frac{C}{\rho^{\alpha}\bigl( \frac{2r e^{i\varphi}}{1+r}\bigr)} \int_0^r \frac{(r-t)^{\alpha-1}}{(1-t)^{q-\gamma}} \, dt.
  \label{e:|F|est}
\end{gather}
We have used the estimate $\rho(z)\le 2\rho (\tau z)$, $\tau \in (0,1)$, $z\in \D$ (\cite[p.41]{FG09}).
%Standard estimates give \int_{r-(1-r)}^r \frac{(r-t)}{}

We now consider two subcases. First, let $q>\gamma\ge 0$. We then choose $\alpha=q-\gamma>0$.  It follows from \eqref{e:|F|est} that
$$ |F(re^{i\varphi})|\le \frac{C}{\rho^{q-\gamma}\bigl( \frac{2r e^{i\varphi}}{1+r}\bigr)} \int_0^r \frac{(r-t)^{q-\gamma-1}}{(1-t)^{q-\gamma}} \, dt\le  \frac{C \log \frac 1{1-r}}{\rho^{q-\gamma}\bigl( \frac{2r e^{i\varphi}}{1+r}\bigr)}.$$

Next, let $q>0>\gamma$. Then we take $\alpha=q$. The estimate \eqref{e:|F|est} yields
$$ |F(re^{i\varphi})|\le \frac{C}{\rho^{q}\bigl( \frac{2r e^{i\varphi}}{1+r}\bigr)} \int_0^r \frac{(r-t)^{q-1}}{(1-t)^{q-\gamma}} \, dt\le  \frac{C (1-r)^{\gamma}}{\rho^{q}\bigl( \frac{2r e^{i\varphi}}{1+r}\bigr)}.$$

Finally, we consider the case $q=\gamma>0$.  We then estimate $F(z)$ using \eqref{e:F_repres}.
\begin{gather*}
|F(z)|\lesssim  \int_0^{2\pi} \Bigl( \frac{1}{|1-\frac zR e^{-i\theta}|} +1  \Bigr) |u(Re^{i\theta})|\, d\theta +1 \lesssim  \\ \lesssim
  \int_0^{2\pi}  \frac{(1-R)^q}{|1-\frac zR e^{-i\theta}|\rho^q(R e^{i\theta})}\, d\theta +1 \lesssim  \\
\lesssim      (1-R)^q     \int_0^{2\pi} \frac{d\nu_{z/R}^{1}(\theta)}{\rho^q(Re^{i\theta})}+1 = \\ =
  (1-R)^q\int_{0}^{\infty} y^{q-1} \nu_{z/R}^{1}\Bl \{ \theta: \frac{1}{\rho(Re^{i\theta})}>y\} \Br \, dy +1 \lesssim \\
\lesssim  (1-R)^q \int_{0}^{\frac1{1-R}} y^{q-1} \log \Bl 1+\frac{2\pi}{\rho(z/R)} \Br \, dy+1 \lesssim   \\
\lesssim    \log \frac{2\pi+2}{\rho(z/R)}, \quad  \frac R2\le |z|<R.
                                                                                               \end{gather*}
To finish the proof, we observe that $$\rho\Bl \frac{1+r}{2}e^{i\varphi}\Br -\rho\Bl re^{i\varphi}\Br \ge -\frac{1-r}{2}\ge -\rho\Bl \frac{1+r}{2}e^{i\varphi}\Br,\quad 0<r<1, $$
so $\rho\Bl \frac{1+r}{2}e^{i\varphi}\Br \ge \frac 12 \rho(re^{i\varphi})$.
\end{proof}
\begin{proof}[Proof of Theorem \ref{t:appl}]
%irst, we consider the case $\gamma=1$. Note that the class
%$\Lambda_1$ consists of functions that are integrals of bounded
%functions. Thus it is sufficient to apply Theorem (6.3)
%\cite[Ch.IV]{Z}, which states that (\ref{e:hl})
% holds if and only if $B(r,u)$ is bounded as $r \uparrow  1$.
%
%Consider the case $\gamma\in (0,1)$.

 The {\it proof of (i)} is standard
(cf.\  \cite{Sh2}).

The following estimates of $P_\alpha(r,t)$ are straightforward
\begin{equation}\label{e:pe}
  \Bigl| \frac \partial{\partial t} P_\alpha(re^{it})\Bigr| \lesssim  \min \biggl\{
  \frac{1}{(1-r)^{\alpha+2}},
  \frac{1} {t^{\alpha+2}}\biggr\} , \; r\ge \frac 12, |t|\le
  \pi.
\end{equation}
%We extend $\psi$ on $\R$ by the formula $\psi(t+2\pi)-\psi(t)=
%\psi(2\pi)-\psi(0)$.
%Since $ P_0(r,t)$ is a periodic and even function in $t$,
Let $\psi(t):=\mu([0, t))$, $t\in [-\pi, \pi]$. We have (cf. \cite{Ch1})
\begin{gather*}
u(re^{i\varphi})= \int\limits_{-\pi+\vfi}^{\pi+\vfi}
P_\alpha(re^{i(\theta-\vfi)}) d(\psi(\theta) - \psi(\varphi)) =\\
%={(\psi(\theta)-\psi(\varphi))P_0(r, \theta-\varphi)}
%\Bigr|_{-\pi+\varphi}^{\pi+\varphi}- \\ -
%\int\limits_{-\pi+\vfi}^{\pi+\vfi} \frac \partial{\partial \theta}
%(P_0(r, \theta -\varphi)) (\psi(\theta) - \psi(\varphi))d\theta
 ={(\psi(2\pi)-\psi(0))P_\alpha(-r)} - \int\limits_{-\pi}^{\pi}
\frac \partial{\partial \tau} (P_\alpha(re^{i\tau})) (\psi(\tau+\varphi) -
\psi(\varphi))d\tau.
\end{gather*}
Hence, using (\ref{e:pe}), we obtain
\begin{gather}\nonumber
|u(re^{i\varphi})| \le O(1)+ \int_{E^*}\frac {|\psi(t) -
\psi(\varphi)|}{|re^{i\varphi}-e^{it}|^{2+\alpha}} dt\le \\ \nonumber
\lesssim 1 + \int\limits_{|\varphi -t|\le \rho(re^{i\varphi})}
\frac{\omega(|t-\varphi|,\psi)}{\rho(re^{i\varphi})^{2+\alpha}} \, dt +
 \int\limits_{\rho(re^{i\varphi})<|t-\varphi|\le \pi} \frac
{ \omega(|t-\varphi|,\psi)}{|t-\varphi|^{2+\alpha}} \, dt \lesssim
\nonumber \\ \lesssim
1 + \frac{1}{\rho(re^{i\varphi})^{2+\alpha}} \int\limits_{0}^{\rho(re^{i\varphi})}
 \tau^\gamma \, d\tau +
 \int\limits_{\rho(re^{i\varphi})}^\pi \frac 1{\tau ^{2+\alpha-\gamma}}
 \, d\tau\lesssim (\rho(re^{i\varphi}))^{-\alpha+ \gamma-1}, \quad r\uparrow 1 . \label{e:mce}
\end{gather}
%Here, we have used increasing of the modulus of continuity.
%Since $\psi\in \Lambda_\gamma$, $\omega(\tau, \psi; \varphi)=
%O(\tau^\gamma)$ as $\tau\downarrow 0$. Thus,
%(\ref{e:mce}) yields
%$$ B(r,u)\le C_2(\gamma) (1-r)^{\gamma -1}, \quad r\uparrow1.$$

{\it Proof of (ii).} % Let $u(re^{i\varphi})$ be harmonic for $r<1$,
%and $\int_{0}^{2\pi} |u(re^{i\varphi})|\, d\varphi\le  C_3$.
We start with the case $\alpha=0$.

\smallskip
\noi{\bf Remark 5.} By Nevanlinna's Theorem, %we have (\ref{e:rep1}), where
%$\psi\in BV [0,2\pi]$, and one can take
 at any
point $\theta$ of continuity of $\psi$ for some sequence $(r_n)$
(\cite{Dj},~\cite[p.57]{Pr}).
\begin{equation}\psi(\theta)=\lim_{r_n\uparrow 1} \int_0^{\theta} u(r_n e^{i\phi})\, d\phi.
\label{e:priv} \end{equation}
%Actually, $\psi$ in representation (\ref{e:rep1}) is unique up to
%a constant summand,  moreover
%\psi(\theta)=\lim_{r\uparrow 1} \int_0^{\theta} u(r e^{i\phi})\, d\phi

\smallskip
Let $F(z)=u(z)+iv(z)$ be  an analytic  function in  $\D$.
By Theorem \ref{t:harm_conj}
$|v(re^{i\varphi})|=O((\rho(re^{i\varphi}))^{\gamma-1}\log \frac{1}{1-r})$
as $r\uparrow 1$.

Define the analytic function
$\Phi(z)=\int_0^z F(\zeta)\, d\zeta$, $z\in D$.
For  any fixed $\varphi\in [0,2\pi]$ and  $0<r'< r''< 1$, we have
%\begin{equation*}
%  |\Phi(re^{i\varphi})|= \Bigl| \int_0^r F(\rho e^{i\varphi})
%  e^{i\varphi}\, d\rho\Bigr| \le C_3 \int_0^r
%  (1-\rho)^{\gamma-1}\, d\rho=C_4(\gamma), \quad r\uparrow 1.
%\end{equation*}
%Moreover,
 \begin{gather*}
  |\Phi(r''e^{i\varphi})-\Phi(r'e^{i\varphi})|= \Bigl| \int_{r'}^{r''} F(\rho e^{i\varphi})
  e^{i\varphi}\, d\rho\Bigr| \le \\ \le C_4 \int_{r'}^{r''}
  (1-\rho)^{\gamma-1}\log \frac 1{1-\rho}\, d\rho\le \frac{C_4}{\gamma} (1-r')^{\gamma}\log \frac 1{1-r'} .
\end{gather*}
Therefore, by Cauchy's criterion, there exists
$\lim_{r\uparrow1} \Phi(re^{i\varphi})\equiv \Phi(e^{i\varphi})$
uni\-form\-ly in $\varphi$. Consequently, $\tilde{\Phi}(\varphi)\stackrel{\rm def}=\Phi(e^{i\varphi})$ is a
continuous function on $[0, 2\pi]$.

%Let us prove that $\tilde{\Phi} \in \Lambda_\gamma$.
Let $h\in (0,1)$, $z_0=e^{i\varphi}$, $z_1=(1-h)e^{i\varphi}$,
$z_2=(1-h)e^{i(\varphi+h)}$, $z_3=e^{i(\varphi+h)}$, and $e^{i\varphi}\in E$.

Then  by Cauchy's theorem
\begin{gather*}
  \Phi(z_3)-\Phi(z_0)=\int_{[0,z_3]} F(z)\, dz +
  \int_{[z_0, 0]} F(z)\, dz= \\
=  \biggl(\int_{[z_0,z_1]} + \int_{z_1} ^{z_2} + \int_{[z_2,z_3]}
\biggr) F(z)\, dz.
\end{gather*}
For sufficiently small $h>0$, we have
\begin{gather}\nonumber
 \Bigl|\int_{[z_0,z_1]} F(z)\, dz\Bigr|\lesssim \int_{1-h}^1
\frac{\log \frac 1{1-r}}{(\rho(re^{i\varphi}))^{1-\gamma}} dr \le \\ \le
\int_{1-h}^1
\frac{\log \frac 1{1-r}}{(1-r)^{1-\gamma}} dr \lesssim  h^\gamma \log \frac{1}{h}, \quad h\to0+. \label{e:segment_est}
\end{gather}

Similarly, $\Bigl|\int_{[z_2,z_3]} F(z)\, dz\Bigr|\lesssim  h^\gamma \log \frac{1}{h}.$
It is obvious that $$|\int_{z_1} ^{z_2}  F(z)\, dz| \lesssim \log \frac{1}{h} \int_{\varphi}^{\varphi+h} \frac{d\theta}{\rho^{1-\gamma}((1-h)e^{i\theta})}
 \lesssim  h^\gamma \log \frac{1}{h}.$$
If $h<0$ the similar estimates hold.
Therefore, \begin{equation}\label{e:Phi_smooth}
             \biggl|\Phi(e^{i(\varphi+h)})- \Phi(e^{i\varphi})\biggr|\lesssim
h^\gamma\log \frac{1}{h}, \quad h\to0+.
           \end{equation}

For $R\in (0,1)$ define
\begin{gather*}
\lambda_R(\theta)=\int_0^\theta  F(Re^{i\sigma})\, d\sigma=
\int_0^\theta \frac{d\Phi(Re^{i\sigma})}{iRe^{i\sigma}}=\\
=\frac{\Phi(Re^{i\theta})}{iRe^{i\theta}} -\frac{\Phi(R)}{iR} +
\frac 1R \int_0^\theta \Phi(Re^{i\sigma}) e^{-i\sigma}\, d\sigma.
\end{gather*}
Since $\Phi(z)$ is continuous in $\{z :|z|\le 1\}$,
$\Phi(Re^{i\sigma}) \text{\raisebox{-5pt}{$\rightrightarrows  \atop
 \theta$}}
\Phi(e^{i\sigma})$
as $R\uparrow 1$.
Consequently, as $R\uparrow 1$
\begin{equation*}
\lambda_R(\theta) \rightrightarrows-\Phi(e^{i\theta})ie^{-i\theta}
+i\Phi(1) +\int_0^\theta \Phi(e^{i\sigma}) e^{-i\sigma}\, d\sigma
\equiv \lambda(\theta) \in C[0,2\pi].
  \end{equation*}
Combining the latter relationship with \eqref{e:Phi_smooth}, we deduce
\begin{equation}\label{e:la_smooth}
             |\lambda(e^{i(\varphi+h)})- \lambda (e^{i\varphi})|\lesssim
h^\gamma\log \frac{1}{h}, \quad h\to0+.\end{equation}
 On the other hand,
$%\begin{equation*} %\label{e:rep1}
u(re^{i\vfi})= \int_0^{2\pi} P_0(r, \vfi-t) d\psi(t)$,
where, by (\ref{e:priv}) and the definition of $\lambda$
$$\psi(\theta)=\lim_{r\uparrow 1} \int_0^{\theta} \Re F(r e^{i\phi})\,
d\phi=\lambda(\theta).
$$
Thus, $\psi$ satisfies required smoothness properties in the case $\alpha =0$.

Let now  $\alpha>0$. We need the following lemma.
\begin{alemma} [{\cite[Lemma 14]{Illin}}] \label{l:gaint} Let $0\le \ga < \al <\infty$. Then there exists a constant $C(\ga, \al)>0$ such that
\begin{equation}\label{e:gaint}
D^{-\ga} \frac{1}{|1-r\zeta|^\al} \le \frac{C(\ga, \al
)}{|1-r\zeta|^{\al-\ga}}, \quad \zeta \in \overline{\D}, 0<r<1.
\end{equation}
\end{alemma}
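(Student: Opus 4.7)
My plan is to reduce the stated bound on the Riemann--Liouville fractional integral to an elementary two-region integral estimate obtained from the triangle inequality. I first write out
$$D^{-\gamma}\frac{1}{|1-r\zeta|^\alpha} = \frac{1}{\Gamma(\gamma)}\int_0^r \frac{(r-x)^{\gamma-1}}{|1-x\zeta|^\alpha}\, dx,$$
dispose of the trivial case $\gamma = 0$, substitute $u = r-x$, and set $\delta := |1-r\zeta|$ to isolate the quantity that the final bound is expressed in.

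The central observation is that the hypothesis $|\zeta|\le 1$ yields two complementary lower bounds valid for $u \in [0,r]$. From $1-(r-u)\zeta = (1-r\zeta)+u\zeta$ and the reverse triangle inequality, $|1-(r-u)\zeta| \ge \delta - u|\zeta| \ge \delta - u$, which is useful when $u$ is small relative to $\delta$. Complementarily, $|1-(r-u)\zeta| \ge 1 - (r-u)|\zeta| \ge 1-(r-u) = (1-r) + u \ge u$, which is useful when $u$ is not too small. Then I split the integration at $u_0 = \min(\delta/2, r)$.

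On $[0, u_0]$ the first bound gives $|1-(r-u)\zeta| \ge \delta/2$, and the integral is controlled by $2^\alpha \delta^{-\alpha}\int_0^{u_0} u^{\gamma-1}\, du$, which is bounded by a constant times $\delta^{\gamma-\alpha}$. On $[u_0, r]$ (empty when $\delta > 2r$) the second bound reduces the integrand to $u^{\gamma-\alpha-1}$; since $\gamma < \alpha$ the exponent lies strictly below $-1$, and explicit integration of this power again produces at most a constant multiple of $\delta^{\gamma-\alpha}$, the leading contribution coming from the lower limit $u_0$. Summing the two pieces and dividing by $\Gamma(\gamma)$ gives the claim with a constant depending only on $\gamma$ and $\alpha$.

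There is no real obstacle: the only subtlety is recognizing that the two triangle-inequality bounds are exactly complementary --- each one degrading precisely where the other takes over --- so that together they cover the full interval of integration and yield matching powers of $\delta$ on both pieces.
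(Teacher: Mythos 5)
Your proof is correct. Note that the paper itself offers no argument for this statement: it is imported verbatim by citation from \cite[Lemma 14]{Illin}, so there is no in-paper proof to compare against; your write-up supplies a self-contained elementary proof of the quoted fact. The two lower bounds you extract from $|\zeta|\le 1$ are both valid ($|1-(r-u)\zeta|\ge \delta-u$ from the reverse triangle inequality applied to $(1-r\zeta)+u\zeta$, and $|1-(r-u)\zeta|\ge 1-(r-u)\ge u$), and the split at $u_0=\min(\delta/2,r)$ makes them exactly complementary: the first piece gives $2^\alpha\delta^{-\alpha}u_0^{\gamma}/\gamma\le 2^{\alpha-\gamma}\gamma^{-1}\delta^{\gamma-\alpha}$ (using $\gamma>0$, the case $\gamma=0$ being trivial since $D^0h=h$), and the second gives $\int_{u_0}^r u^{\gamma-\alpha-1}\,du\le u_0^{\gamma-\alpha}/(\alpha-\gamma)\le 2^{\alpha-\gamma}(\alpha-\gamma)^{-1}\delta^{\gamma-\alpha}$, where the strict inequality $\gamma<\alpha$ is used exactly where it must be. Since $\delta=|1-r\zeta|\ge 1-r>0$, all quantities are well defined. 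This splitting is the standard (and essentially only) way to get the sharp exponent $\alpha-\gamma$; a single uniform lower bound of the form $|1-x\zeta|^\alpha\gtrsim |1-r\zeta|^{\alpha-\beta}(r-x)^{\beta}$ would force $\beta<\gamma$ for convergence and lose the endpoint exponent. No gaps.
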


Applying this lemma we obtain, that $u_\alpha(z)=\int_{\partial D}  P_0(z\bar\zeta) d\mu(\zeta)$ satisfies
 \begin{equation}\label{e:u_al_growth}
    |u_\alpha(z)|\le \frac{C}{d^{1-\gamma}(z)}, \quad z\in \D.  \end{equation}
It remains  to apply proved assertion of (i) for $\alpha=0$.
 \end{proof}
%\newpage

\bigskip
{{\it Address 1:} School of Mathematics Science,
           Guizhou  Normal University, Guiyang, Guizhou 550001, China
%Faculty of Mathematics and Computer Sciences, University of Warmia and Mazury in Olsztyn,
%S\l oneczna 54, 10-710, Osztyn, Poland
{\it e-mail:} chyzhykov@yahoo.com}

{\it Address 2:} Faculty of Mechanics and Mathematics, Lviv Ivan Franko National University,
Universytets'ka 1, 79000,
 Lviv,  Ukraine


\begin{thebibliography}{10}
\parskip=-1pt
%\bibitem{camb}
%Department of Engineering,
%\emph{Text processing using \LaTeX.}
%University of Cambridge
%\url{http://www.eng.cam.ac.uk/help/tpl/textprocessing/}.
%
%\bibitem {ch}
%T. Chinburg,
%\emph{Minimal models of curves over Dedekind rings.}
%In: Arithmetic Geometry, Springer, New York, 1986,
%pp. 309--326.
%
%\bibitem{gi}
%P.~K. Gilbert,
%\emph{COLLATE: A System to Aid in the Preparation Of Critical Editions.}
%Ph.D. dissertation, University of Wisconsin, 1978.
%
%\bibitem {gr}
%G. Gr\"atzer,
%\emph{Math into \LaTeX: An Introduction to \LaTeX\ and \AmS-\LaTeX.}
%Birkh\"auser, Boston, 1996.
%
%\bibitem {KD}
%H. Kopka and P.~W. Daly,
%\emph{Guide to \LaTeX\.} Fourth edition.
%Addison-Wesley, Boston, 2004.
%
%\bibitem {mg}
%F. Mittelbach and M. Goossens,
%\emph{The \LaTeX\ Companion.}
%Second edition. Addison-Wesley, Boston, 2004.
%
%\bibitem{mo}R.~A. Mollin,
%{Necessary and sufficient conditions for the central norm
%to equal $2^h$ in the simple continued fraction expansion
%of $\sqrt2^hc$ for any odd $c>1$.}
%Canad. Math. Bull.
%{\bf 48}(2005), no.~1, 121--132.
%
%\bibitem{tug} The \TeX\ User Group (TUG) Home Page, \url{www.tug.org}

%\bibitem{Br}  Brelo, M.:  On topologies and boundaries in potential theory, Springer-Verlag, Berin-Heidelberg-New York,  Lect.Notes in Mathemetics, V.175 (1971).
%
%    \bibitem{BrOC} Brune, J.,  Otrega-Cerd\'a,  J.: {On $L^p$-solutions of the Laplace equation and zeros of holomorphic functions,}
%    Annali della Scuola Normale Superiore di Pisa -- Classe di Scienze {24:3}    571--591 (1997).

\bibitem{AK}  Ahern P.R.,   Clark D.N.:  On inner functions with $B^p$ derivative. {\sl Michigan Math. J.} {\bf 23} (1976), no. 2, 107--118. %doi:10.1307/mmj/1029001659.


\bibitem{BGK09}  Borichev S.,  Golinskii L., Kupin S.:  A Blaschke-type condition and its application to complex
Jacobi matrices, {\sl Bull. Lond. Math. Soc.} {\bf 41} (2009), 117--123.


\bibitem{BorGol12} Borichev S.,  Golinskii L., Kupin S.:  On zeros of analytic functions satisfying non-radial growth conditions,
{\sl Revista Matematica Iberoamericana} {\bf 34} (2018), no. 3, 1153--1176.


\bibitem{Ch1}  Chyzhykov, I.: Growth and representation of analytic and
harmonic functions in the unit disk, {\sl Ukr.\ Math.\ Bull.} {\bf 3}  (2006), no.1, 31--44.

\bibitem{Chmm} Chyzhykov, I.: {A generalization of Hardy-Littlewood's
theorem,}  {\sl Math. Methods and Physicomechanical Fields,} {\bf 49}  (2006), no.  2, 74--79. (in Ukrainain)

%\bibitem{Chy08ms} Chyzhykov, I.\  E.: {Growth of  analytic  functions in the unit disc
%and complete measure in the sense of Grishin}, Mat. Stud. 29:1, 35--44 (2008).
\bibitem{ChUMZ} Chyzhykov, I.: On a complete description of the class of functions without zeros analytic in a disk and having given orders, {\sl Ukrainian Math. J.}, {\bf 59}  (2007), no.7, 1088--1109.

\bibitem{Illin}  Chyzhykov, I.: {Argument of bounded analytic functions and Frostman's type conditions,} {\sl Ill. J. Math.} {\bf 2} (2009), {no.2}, 515--531.

%\bibitem{5}
%Chyzhykov, I.: {Zero distribution and factorisation of analytic functions of slow growth in the unit disc,} Proc. Amer. Math. Soc., {141}  1297--1311 (2013).
%\bibitem{6}
%I.\ Chyzhykov, I.\ Sheparovych, Interpolation of analytic functions of moderate growth in the unit disc and zeros of solutions of a linear differential equation,  J. Math. Anal. Appl., 414, 319--333 (2014).

%\bibitem{Chy_Ska}
% Chyzhykov,  I.,  Skaskiv, S.: {Growth, zero distribution and factorization of analytic functions of moderate growth in the unit disc, Blaschke products and their applications,} Fields Inst. Comm., {65}    159--173 (2013).

%\bibitem{ChyVoCV} Chyzhykov, I.,  Voitovych, M.: {Growth description  of $p$th means of the Green potential in the unit ball,}
% Complex Variables and Elliptic Equations,  52:7,  899--913 (2017).


 \bibitem{ChIJM}  Chyzhykov I: Asymptotic behaviour  of $p$th means of  analytic and subharmonic functions in the unit disc and angular distribution of zeros, to appear in {\sl Israel Jour. of Mathematics}.

%\bibitem{rho_infty} I. Chyzhykov, submitted to Proc.Amer.Math.Soc.
%
%\bibitem{CL} Collingwood, E.F., Lohwater, A.J.: The theory of cluster
%sets, Cambridge Univer. Press (1966).
%%Э.Коллингвуд, А.Ловатер, Теория предельных множеств//М.:Мир,
%%1971.-312с.
%
%
%\bibitem{Fr} O.Frostman, Sur le produits de Blaschke, K.Fysiogr.Sallsk.Lund
%Forh. {\bf 12} (1939), no.15, 1--14.


 \bibitem{Dj}   Djrbashian, M.M.: Integral transforms and
 representations of functions in the complex domain, Moscow, Nauka, 1966 (in Russian).
% %Джрбашян М.М. Интегральные преобразования и представления функций
% %в комплексеной области, М.: Наука,
% 1966 (in Russian).

\bibitem{Dj73} { Djrbashian}, M. M.:  {Theory of factorization and boundary properties of functions meromorphic in the disc,\/}  Proceedings of the ICM, Vancouver, BC, 1974.

\bibitem{Dur}   Duren, P. L.: Theory of $H^p$ spaces, Academic press, NY and London, 1970,  258 pp.

%\bibitem{Gard84} {  Gardiner, S. J.:} {Representation and growth of subharmonic functions in half-space.} Proc. London Math. Soc. (3) {48} 300--318 (1984).
%
%
%\bibitem{Gar88}  Gardiner, S. J.: {Growth properties of $p$th means of potentials in the unit ball,} Proc. Amer.
%Math. Soc. {103},  861--869 (1988).
%
%\bibitem{Gov}   Govorov,  N. V.:
%The Riemann boundary value problem with an infinite index.
%Moscow: Nauka (1986) (in Russian).


%\bibitem{Gr68}  Grishin, A.F.: {On growth regularity of subharmonic functions, II,} Theory of functions, func. anal. and appl.  7, 59--84 (1968) (in Russian).
%
%\bibitem{Gr1} Grishin, A. {\sl Continuity and asymptotic continuity of
%subharmonic  functions,} Math. Physics, Analysis, Geometry, ILPTE {1: 2}, 193--215 (in Russian) (1994).
%
%
%

\bibitem{FG09}  Favorov S., Golinskii L.: A Blaschke-type condition for analytic and subharmonic functions and application to contraction operators, In {\sl Linear and Complex Analysis,  Amer. Math. Soc. Transl.} {\bf 226} (2009), no.2, 37--47.

\bibitem{FG12} Favorov S., Golinskii L.: Blaschke-type conditions for analytic and subharmonic functions in the unit disk: local analogs and inverse problems, {\sl Computational Methods and Function Theory} {\bf 12} (2012), no.1, 151--166.
%\bibitem{FG}  Fedorov, M.A.,  Grishin, A.F.: {Some questions of the Nevanlinna theory for
%the complex half-plane,} Math. Physics, Analysis and  Geometry (Kluwer Acad. Publish.) {1: 3}, 223--271 (1998).



\bibitem{FG19} Favorov S., Golinskii L.: On a Blaschke-type condition for subharmonic functions with two sets of singularities on the boundary, 2019, arXiv preprint arXiv:1901.02762


\bibitem{Gol61}  Golubev V. V.: Study  on the theory of singular points of singlevalued analytic functions, in Single Valued analytic functions. Automorphic functions. Moscow, Glavnoe. Izd. fiz.-mat lit. 1961, 197--370. (in Russsian)

\bibitem{HL} Hardy G.H., Littlewood J.E.: Some properties of
fractional integrals. II,  {\sl Math. Zeitschrift} {\bf 34} (1931/32),
403--439.

\bibitem{LeLo} Lieb E., Loss M.: Analysis, Graduate Studies in mathematics,  vol.14, Amer. Math. Soc., Providence. RI, 1997. 

\bibitem{Pr}  Privalov I. I.:  Boundary  values of single valued fuctions, Moscow, MGU, 1941 (in Russian).

%Привалов И.И. Граничные свойства однозначных
%аналитических функций. -- М. МГУ, 1941.
%\bibitem{Ha} {Hayman, W.K.}: Meromorphic functions, Oxford, Clarendon
%press (1964).
%
%
%
%
%\bibitem{HK} Hayman, W.K., Kennedy, P.B.: Subharmonic functions, V.1.
%Academic press, London-New York-San Francisco (1976).



%\bibitem{L_rep} Linden, C.N.: {The representation of regular functions,} J. London Math. Soc. {39},  19--30 (1964).
%
%\bibitem{Li71}
%Linden, C.N.: {On Blaschke products of restricted growth,} Pacific J. of Math., {38:2}, 501--513 (1971).
%
%\bibitem{Li_mp}
%Linden, C.N.: {Integral logarithmic means for regular functions,} Pacific J. of Math., {138: 1}, 119--127 (1989).
%
%\bibitem{Li_mp1} Linden, C.N.: {The characterization of orders for regular functions},
%{ Math. Proc. Cambrodge Phil. Soc.} {{111}:2}, 299--307 (1992).


%\bibitem{MR}  MacLane, G.R.,  Rubel, L.A.: {On the growth of the Blaschke products,} Canad. J. Math. {21}, 595--600 (1969).
%
%\bibitem{VM} Mykytyuk, Ya. V., Vasyl’kiv, Ya. V.: {The boundedness criteria of integral means of Blaschke product logarithms,} Dopov. Nats. Akad. Nauk Ukr., Mat. Prirodozn Tekh. Nauki {8}, 10--14 (2000). (in Ukrainian)


%\bibitem{Ry} Rybkin, A.V.: {Convergence of arguments of Blaschke
%products in $L_p$-metrics,} Proc. Amer. Math. Soc. {111: 3}, 701--708 (1991).
%
\bibitem{Sh2} Sheremeta M.M.: On the asymptotic behaviour of
Cauchy-Stieltjes integrals, {\sl Mat. Stud.}, {\bf 7} (1997),
no.2, 175--178.

\bibitem{ShiWil}   Shields A.\ L.,   Williams D.\ L.: {Bounded projections and the growth of harmonic conjugates in the unit disc}, \textsl{Michigan Math. J.} {\bf 29} (1982), 3--25.

%\bibitem{stein}   Stein, E.M.: Singular integrals and differentiability  properties  of functions, Princeton University Press, Princeton, New Jersey (1970).
%
%\bibitem{Sto83} Stoll, M.: {On the rate of growth of the means $M_p$ of holomorphic and pluriharmonic functions on the ball,} J.Math. Anal. Appl. {93}, 109--127 (1983).
%
%\bibitem{Sto89}    Stoll,  M.: {Rate of growth of $p$th means of
%invariant potentials in the unit ball of $\mathbb{C}^n$,} J.\ Math. Anal. Appl. {143}, 480--499  (1989).
%
\bibitem{Tsuji}   Tsuji, M.: {Potential theory in modern function theory}. -- Chelsea Publishing Co.                 Reprinting of the 1959 edition. New York, 1975.

%
%\bibitem{Vas} %Ярослав Васильків, Ряди Фур'є і мероморфні функції, пп. Голіней, Львів, 2016. -- 160 с.
% Vasyl'kiv, Ya.: Fourier series and meromorphic functions, Holiney, Lviv,  160~pp. (2016) (in Ukrainian)

\bibitem{Z} Zygmund A.: Trigonometric series, V.1,
Cambridge Univ. Press, 1959.


\end{thebibliography}
\end{document}